\def\subsection#1{\removelastskip\par\medskip\setcounter{equation}{0}
\refstepcounter{subsection}\noindent{\bf\thesubsection. #1}}
\def\g{ $\displaystyle {\frak g}$ }
\def\g01{ $\displaystyle {\frak g}={\frak g}_{\bar 0}\oplus{\frak g}_{\bar
1}$ }
\def\g0{ $\displaystyle {\frak g}_{\bar 0}$ }
\def\g1{ $\displaystyle {\frak g}_{\bar 1}$ }
\newtheorem{teo}{Theorem}[section]
\newtheorem{cor}[teo]{Corollary}
\newtheorem{lem}[teo]{Lemma}
\newtheorem{exa}[teo]{Example}
\newtheorem{pro}[teo]{Proposition}
\theoremstyle{definition}
\newtheorem{defi}[teo]{Definition}
\theoremstyle{remark}
\newtheorem{rmk}[teo]{Remark}
\def\3{{\sf 3}}
\def\g{{\gamma}}
\begin{document}

\title[Quasicrossed product on $G$-graded quasialgebras]{Quasicrossed product on
$G$-graded quasialgebras}

\author[H. Albuquerque ]{Helena~Albuquerque}

\address{Helena~Albuquerque, CMUC, Departamento de Matem\'atica, Universidade de Coimbra, Apartado 3008,
3001-454 Coimbra, Portugal {\em E-mail address}: {\tt lena@mat.uc.pt}}

\thanks{The first and the second authors acknowledge financial assistance by  the Centro de Matem\'{a}tica da Universidade de Coimbra (CMUC), funded by the European Regional Development Fund through the program COMPETE
and by the Portuguese Government through the FCT - Funda\c{c}\~{a}o para a Ci\^{e}ncia e a Tecnologia
under the project PEst-C/MAT/UI0324/2011. Third author acknowledges the University of M\'alaga for the
grant to performs the research stay and is supported by the PAI's
with project number FQM-298, by the project of the Junta de
Andaluc\'{i}a FQM-7156 and by the scholarship of the University of
C\'adiz with reference PU2012-036-FPI}

\author[E. Barreiro]{Elisabete~Barreiro}

\address{Elisabete~Barreiro, CMUC, Departamento de Matem\'atica, Universidade de Coimbra, Apartado 3008,
3001-454 Coimbra, Portugal
{\em E-mail address}: {\tt mefb@mat.uc.pt}}{}


\author[Jos\'{e} M. S\'{a}nchez-Delgado]{Jos\'{e} M. S\'{a}nchez-Delgado}

\address{Jos\'{e} M. S\'{a}nchez-Delgado, Departamento de Matem\'{a}ticas, Universidad de C\'{a}diz, 11510 Puerto Real, C\'{a}diz, Spain {\em E-mail address}: {\tt txema.sanchez@uma.es}}


\begin{abstract}
The notion of quasicrossed product is introduced in the setting of $G$-graded quasialgebras, i.e., algebras endowed with a grading by a group $G$, satisfying a ``quasiassociative'' law. The equivalence between quasicrossed products and quasicrossed systems is explored. It is presented the notion of graded-bimodules in order to study simple quasicrossed products. Deformed group algebras are stressed in particular.
\end{abstract}

\maketitle

\bigskip

\section{Introduction}

The $G$-graded quasialgebras were introduced by H. Albuquerque and S. Majid about a decade ago \cite{AM1999},  and during the last years have been addressed with some collaborators (see \cite{AEPI2008} and the references therein).  Inspired by the theory of graded rings and graded algebras \cite{Fin_Dim_Algebras, Kar1989, Kar1990, NO1982}, in the present paper we extend the concepts
of crossed products and crossed systems to the context of $G$-graded quasialgebras.
The division graded quasialgebras are quasicrossed products, as well as some notable nonassociative algebras such as the deformed group algebras, for example, Cayley algebras and Clifford algebras.
Among them, we stress the octonions with potential relevance to many interesting fields of mathematics, namely Clifford algebras and spinors, Bott periodicity, projective
and Lorentzian geometry, Jordan algebras, and the exceptional Lie groups. We also refer to its applications to physics, such as, the foundations of quantum mechanics and string theory.
There are some others interesting examples of quasicrossed products, such as the simple antiassociative superalgebras.  We prove some basic results about quasicrossed products and quasicrossed systems, emphasizing the case of deformed group algebras.
Our work extends the study on unital antiassociative quasialgebras with semisimple even part presented in \cite{AS2004}.

In Section 2, we collect the basic definitions and properties related to graded quasialgebras. Section 3 is devoted to a review of some properties of the set of the units of a graded quasialgebra. In Section 4 the quasicrossed products and systems are defined and interrelated. Some examples of quasicrossed products are also included. Then, in Section 5, we establish a relation of equivalence between quasicrossed systems, a relation between quasicrossed products and, in the end, we relate them in a suitable way.
Section 6 reveals some compatibility between quasicrossed products and
the Cayley-Dickson process. It is shown that the quasicrossed system corresponding to the deformed group algebra  obtained from the Cayley-Dickson process applied to a deformed group algebra is related to the quasicrossed system corresponding to the initial algebra.
Section 7 is dedicated to simple quasicrossed products. The definition of representation of a graded quasialgebra is introduced and described in a commutative diagram. Some examples of graded modules over graded quasialgebras are included. Finally, in Section 8, central simplicity (as quasialgebras)
for deformed group algebras corresponding to quasicrossed systems is explored.

\section{Preliminaries}

Throughout this work, $A$ denotes an algebra with identity element
$1$ over an algebraically closed field $\mathbb{K}$ with
characteristic  zero. We also fix a multiplicative group $G$ with
neutral element $e$.

\begin{defi}
A \textit{grading} by a group $G$ of an algebra $A$ is a
decomposition  $A =\bigoplus _{g \in G}A_g$ as a direct
sum of vector subspaces $\{A_g \neq 0 : g \in G\}$  of $A$ indexed
by the elements of $G$ satisfying
\begin{equation*}\label{grading_condition_1}
A_gA_h \subset A_{gh}, \hspace*{0.2cm} \hbox{{\rm for any}}
\hspace*{0.2cm} g,h \in G,
\end{equation*}
where we denote by $A_gA_h$ the set of all finite sums of products
$x_gx_h$ with $x_g \in A_g$ and $x_h \in A_h$. An algebra $A$ endowed with a grading by a group $G$ is
 called a \textit{$G$-graded algebra}.
Moreover, if it satisfies the stronger condition
\begin{equation*}
A_gA_h = A_{gh}, \hspace*{0.2cm} \hbox{{\rm for any}}
\hspace*{0.2cm} g,h \in G,
\end{equation*}
we call $A$ a \textit{strongly $G$-graded algebra}.
\end{defi}

\noindent In this paper $G$ is generated by the set of all the
elements $g \in G$ such that $A_g \ne 0$, usually called the
\emph{support} of the grading.
The subspaces $A_g$ are referred to as \textit{homogeneous
component} of the grading. The elements of $\bigcup_{g \in G}A_g$
are called the \textit{homogeneous elements} of $A$. A nonzero
$x_g  \in A_g$ is \textit{homogeneous} of degree $g$ and is denoted
by $deg \: x_g =g$. Any nonzero element $x \in A$ can be written
uniquely in the form $x =\sum_{g \in G}x_g$, where $x_g \in
A_g$
and at most finitely many elements $x_g $ are nonzero.
Given two gradings $\Gamma$ and $\Gamma'$ on $A$, $\Gamma$ is  a
{\it refinement} of $\Gamma'$ if any homogeneous component of
$\Gamma'$ is a (direct) sum of homogeneous components of $\Gamma$.
A grading is {\it fine} if it admits no proper refinement.
Throughout this paper, the gradings will be considered fine.

A subspace $B\subseteq A$ is called a \textit{graded subspace} if $B =\bigoplus_{g \in G}(B\cap A_g)$. Equivalently, a subspace $B$ is graded if
for any $x \in B$, we can write $x =\sum_{g \in G}x_g$, where
$x_g$ is a homogeneous element of degree $g$ in $B$, for any $g
\in G$. The usual regularity concepts will be understood in the
graded sense. That is, a \textit{graded subalgebra} is a
subalgebra which is a graded subspace, and an \textit{ideal} $I
\subset A$ is a graded subspace $I = \oplus_{g \in G}I_g$ of $A$
such that $IA + AI \subset I$.

\begin{defi}\cite{AM1999}
A map $\phi : G \times G \times G \longrightarrow
\mathbb{K}^{\times}$ is called a \textit{cocycle} if
\begin{eqnarray}
&&\phi(h,k,l) \phi(g,hk,l) \phi(g,h,k) = \phi(g,h,kl)
\phi(gh,k,l),
\label{eq:cocyclecond} \\
&&\phi(g,e,h)=1, \label{eq:cocyclecondi}
\end{eqnarray}
hold for any $g,h,k,l \in G$, where $e$ is the identity of $G$.
\end{defi}
\noindent Next lemma lists some properties of
cocycles  useful  in the sequel.
\begin{lem} \label{lem:propcocycle}
If $\phi : G \times G \times G \longrightarrow
\mathbb{K}^{\times}$ is  a cocycle then the following conditions
hold for any $g,h \in G$:
\begin{enumerate}
\item[(i)] $\phi(e,g,h) =\phi(g,h,e) = 1$; \item[(ii)]
$\phi(g,g^{-1},g) \phi(g^{-1},g,h) = \phi(g,g^{-1},gh)$;
\item[(iii)] $\phi(g,g^{-1},g) \phi(g^{-1},g,g^{-1})=1$;
\item[(iv)] $\phi(h,h^{-1},g^{-1}) \phi(g,h,h^{-1})=
\phi(g,h,h^{-1}g^{-1})\phi(gh,h^{-1},g^{-1})$.
\end{enumerate}
\end{lem}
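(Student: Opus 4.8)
The plan is to derive all four identities directly from the defining relations of a cocycle, namely the pentagon-type identity \eqref{eq:cocyclecond} and the normalization \eqref{eq:cocyclecondi}, by making judicious substitutions of the group variables. All computations take place in the abelian group $\mathbb{K}^{\times}$, so I will freely multiply and divide by cocycle values, which are never zero.

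For (i), I would substitute $g=e$ into \eqref{eq:cocyclecond}. Using \eqref{eq:cocyclecondi} in the forms $\phi(e,hk,l)=1$ and $\phi(e,h,k)=1$, the left-hand side collapses to $\phi(h,k,l)$, while the right-hand side becomes $\phi(e,h,kl)\,\phi(h,k,l)=\phi(h,k,l)$; renaming variables this is really the statement $\phi(e,g,h)=1$ after one more application of normalization, so a cleaner route is to set $g=e$, $k=e$ to isolate $\phi(e,g,h)$. Symmetrically, setting $l=e$ in \eqref{eq:cocyclecond} and using $\phi(g,e,h)=1$ twice gives $\phi(h,k,e)\,\phi(g,h,e)=\phi(g,h,e)\,\phi(gh,k,e)$, from which $\phi(h,k,e)=\phi(gh,k,e)$ for all $g$; taking $g=e$ and using part of the argument, or directly taking $k=e$, yields $\phi(g,h,e)=1$. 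The only subtlety is choosing the substitution that does not re-introduce an unknown value, but both $\phi(e,\cdot,\cdot)$ and $\phi(\cdot,\cdot,e)$ fall out after at most two substitutions.

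For (ii), I would substitute $(g,h,k,l)=(g^{-1},g,g^{-1},gh)$ into \eqref{eq:cocyclecond}, or more naturally apply \eqref{eq:cocyclecond} with the four slots chosen so that the products $hk$, $gh$, $kl$ collapse via cancellation of $g$ with $g^{-1}$; after simplifying with part (i) to kill the terms of the form $\phi(e,\cdot,\cdot)$ and $\phi(\cdot,\cdot,e)$, the surviving relation is exactly $\phi(g,g^{-1},g)\,\phi(g^{-1},g,h)=\phi(g,g^{-1},gh)$. Then (iii) is the special case $h=g^{-1}$ of (ii) combined once more with (i): putting $h=g^{-1}$ gives $\phi(g,g^{-1},g)\,\phi(g^{-1},g,g^{-1})=\phi(g,g^{-1},e)=1$ by (i). Finally (iv) is obtained from \eqref{eq:cocyclecond} by the substitution $(g,h,k,l)\mapsto(g,h,h^{-1},g^{-1})$: the left side is $\phi(h,h^{-1},g^{-1})\,\phi(g,e,g^{-1})\,\phi(g,h,h^{-1})$, and $\phi(g,e,g^{-1})=1$ by \eqref{eq:cocyclecondi}, while the right side is $\phi(g,h,h^{-1}g^{-1})\,\phi(gh,h^{-1},g^{-1})$, which is precisely the claimed identity.

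The work here is essentially bookkeeping: the main obstacle is purely organizational, namely picking, for each of the four items, the single substitution into the five-variable pentagon identity that makes the right group-element cancellations happen while leaving no new unknown cocycle value behind, and then pruning the trivial factors using \eqref{eq:cocyclecondi} and part (i). No genuine difficulty is expected, and the proof is short once the substitutions are recorded.
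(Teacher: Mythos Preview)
Your approach matches the paper's: each item is obtained by a single substitution into \eqref{eq:cocyclecond} followed by pruning trivial factors with \eqref{eq:cocyclecondi} and part (i); the paper sets $k=e$ for (i), takes $(g,h,k,l)=(g,g^{-1},g,h)$ for (ii), reads (iii) as the case $h=g^{-1}$ of (ii), and uses exactly your substitution $(g,h,k,l)=(g,h,h^{-1},g^{-1})$ for (iv).

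One small correction for (ii): your explicitly written substitution $(g,h,k,l)=(g^{-1},g,g^{-1},gh)$ produces
\[
\phi(g,g^{-1},gh)\,\phi(g^{-1},g,g^{-1})=\phi(g^{-1},g,h),
\]
which is only equivalent to (ii) \emph{after} invoking (iii), and that would be circular. The substitution that works directly is $(g,h,k,l)=(g,g^{-1},g,h)$, which is what your verbal description (``products $hk$, $gh$, $kl$ collapse'') actually points to and is what the paper uses.
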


\begin{proof}
First we show (i). In \eqref{eq:cocyclecond} we consider
$k=e$ and get
$$\phi(h,e,l) \phi(g,h,l) \phi(g,h,e) = \phi(g,h,l) \phi(gh,e,l).$$ Now by
 \eqref{eq:cocyclecondi} it comes $\phi(g,h,e) = 1$. We
obtain the other equality in a similar way. To show (ii) we
replace in \eqref{eq:cocyclecond} $h$ by $g^{-1}$, $k$ by $ g$, $l$ by $h$ and take in account (i). The item (iii) is a particular
case of (ii) with $h = g^{-1}$. Now we prove (iv) from the
definition of cocycle. For any $g,h \in G$
\begin{equation*}
\begin{split}
\phi(h,h^{-1},g^{-1}) \phi(g,h,h^{-1})&=
\phi(h,h^{-1},g^{-1})\phi(g,hh^{-1},g^{-1})\phi(g,h,h^{-1})\\
&= \phi(g,h,h^{-1}g^{-1})\phi(gh,h^{-1},g^{-1})
\end{split}
\end{equation*}

\end{proof}
\noindent The category of $G$-graded vector spaces is monoidal by
way of
\begin{equation*}
\begin{split}
\Phi_{V,W,Z}:(V\otimes W) \otimes Z&\longrightarrow  V\otimes (W
\otimes
Z)\\
(v_g \otimes w_h)\otimes z_k &\longmapsto   \phi(g,h,k) v_g
\otimes (w_h \otimes z_k),
\end{split}
\end{equation*}
for any homogeneous elements $v_g$ of degree $g $ in $V$, $w_h$ of
degree $h $ in $W$ and $z_k$ of degree $k $ in $Z$.

\begin{defi}
A map $F : G \times G \longrightarrow \mathbb{K}^{\times}$ is a
\textit{$2$-cochain} if
\begin{equation*}
\begin{split}
&F(e,g) = F(g,e) = 1,
\end{split}
\end{equation*}
holds for any $g \in G$.
\end{defi}

\noindent The notion of graded quasialgebra was introduced in \cite{AM1999}. This new concept includes the usual associative algebras but also some notable nonassociative examples, like the octonions.
\begin{defi}
Let $\phi : G \times G \times G \longrightarrow
\mathbb{K}^{\times}$ be an invertible cocycle. A \emph{$G$-graded
quasialgebra} (or just a \emph{graded quasialgebra}) is a
$G$-graded algebra $A = \bigoplus_{g \in G}A_g$ with pro\-duct map
$ A \otimes A \longrightarrow A$  obeying to the quasiassociative law in the sense
\begin{equation}\label{quassiasociative_condition}
\begin{split}
(x_gx_h)x_k& = \phi(g,h,k)x_g(x_hx_k),
\end{split}
\end{equation}
for any $x_g \in A_g, x_h \in A_h, x_k \in A_k$. Moreover, a
graded quasialgebra $A$ is called \textit{coboundary} if the
associated cocycle is
\begin{equation*}
\begin{split}
\phi(g,h,k)& = \frac{F(g,h)F(gh,k)}{F(h,k)F(g,hk)},
\end{split}
\end{equation*}
for a certain $2$-cochain $F$ and $g,h,k \in G$.
\end{defi}

\begin{rmk} \label{rmk:hfvgyug}
 If $A$ is a
$G$-graded quasialgebra then $A_e$ is an associative algebra ($1 \in A_e$) and $A_g$ is an associative $A_e$-bimodule for any $g \in A_g$.
\end{rmk}

\begin{exa}
All associative graded algebras are graded quasialgebras (with
$\phi(g,h,k) = 1$ for any $g,h,k \in G$). In particular for the
group $G = \mathbb{Z}_2$, the graded quasialgebras admit only two types of algebras.
The mentioned associative case with $\phi $ identically 1, and the antiassociative case with
$\phi(x,y,z) = (-1)^{x y  z}$, for
all $x,y,z \in \mathbb{Z}_2$. The antiassociative quasialgebras were studied in \cite{AEPI2002} and
recently addressed in \cite{ABB2013}. For $G = \mathbb{Z}_3$, every cocycle has the form
\begin{eqnarray*}
&\phi_{111}=\alpha, \hspace{0.2cm} \phi_{112}=\beta, \hspace{0.2cm} \phi_{121}=\frac{1}{\omega\alpha},  \hspace{0.2cm}
\phi_{122}=\frac{\omega}{\beta}, & \\
&\phi_{211}=\frac{\alpha}{\beta\omega}, \hspace{0.2cm} \phi_{212}=\alpha\omega, \hspace{0.2cm} \phi_{221}=\frac{\beta}{\omega\alpha},  \hspace{0.2cm}
\phi_{222}=\frac{\omega}{\alpha}, &
\end{eqnarray*}
for some nonzero $\alpha ,\beta \in \mathbb{K}$ and $\omega$ a cubic root of the unity. Here  $\phi_{111}$ is a shorthand for $\phi(1,1,1)$, etc. $ \mathbb{Z}_n$-quasialgebras are studied in \cite{AM1999a}.
\end{exa}


\begin{lem} \label{lem:strongly_iif}
A $G$-graded quasialgebra $A$ is strongly graded if and only if $1
\in A_gA_{g^{-1}}$ for all $g \in G$.
\end{lem}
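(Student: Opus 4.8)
The plan is to establish the two implications separately, the forward one being immediate and the converse resting on a ``partition of the identity'' argument adapted to the quasiassociative setting.

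For the direct implication, suppose $A$ is strongly graded. Then by definition $A_g A_{g^{-1}} = A_{gg^{-1}} = A_e$ for every $g \in G$, and since $1 \in A_e$ (see Remark~\ref{rmk:hfvgyug}) we conclude $1 \in A_g A_{g^{-1}}$.

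For the converse I would argue as in the classical theory of strongly graded rings, keeping track of the cocycle scalars. Fix $g, h \in G$. Because $A_g A_h \subseteq A_{gh}$ always holds (it is part of the definition of a grading), it suffices to prove $A_{gh} \subseteq A_g A_h$. By hypothesis we may write $1 = \sum_{i=1}^{n} u_i v_i$ with $u_i \in A_g$ and $v_i \in A_{g^{-1}}$. Given an arbitrary $x \in A_{gh}$, I would compute
\begin{equation*}
x = 1 \cdot x = \sum_{i=1}^{n} (u_i v_i) x = \sum_{i=1}^{n} \phi(g, g^{-1}, gh)\, u_i (v_i x),
\end{equation*}
where the last equality is the quasiassociative law \eqref{quassiasociative_condition} applied to the homogeneous elements $u_i \in A_g$, $v_i \in A_{g^{-1}}$, $x \in A_{gh}$. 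Now $v_i x \in A_{g^{-1}} A_{gh} \subseteq A_{g^{-1}(gh)} = A_h$, so $u_i (v_i x) \in A_g A_h$; and since $\phi(g, g^{-1}, gh) \in \mathbb{K}^{\times}$ and $A_g A_h$ is a vector subspace of $A$, the scalar factor does not take us outside $A_g A_h$. Hence $x \in A_g A_h$, which gives $A_{gh} \subseteq A_g A_h$, and therefore $A_g A_h = A_{gh}$ for all $g,h \in G$.

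The only delicate point I anticipate is making sure the quasiassociative law is invoked solely on homogeneous elements, which is precisely why $1$ has to be decomposed as a sum of products $u_i v_i$ of homogeneous elements rather than treated abstractly; everything else — in particular the stability of $A_g A_h$ under the scalar $\phi(g, g^{-1}, gh)$ — is routine, and no identity from Lemma~\ref{lem:propcocycle} is needed beyond the plain fact that the cocycle is $\mathbb{K}^{\times}$-valued.
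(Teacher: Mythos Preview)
Your proof is correct and follows essentially the same route as the paper's: the paper compresses your computation into the single line $A_{gh} = 1\cdot A_{gh} \subset A_gA_{g^{-1}}A_{gh} \subset A_gA_h$, whereas you carefully decompose $1$ and apply the quasiassociative law on homogeneous elements to justify the passage from $(A_gA_{g^{-1}})A_{gh}$ to $A_g(A_{g^{-1}}A_{gh})$. Your added care about using \eqref{quassiasociative_condition} only on homogeneous factors is appropriate and makes the argument cleaner.
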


\begin{proof}
Suppose $1 \in A_gA_{g^{-1}}$ holds for all $g \in G$. For any $h
\in G$ it follows then that $$A_{gh} = 1A_{gh} \subset
A_gA_{g^{-1}}A_{gh} \subset A_gA_h,$$ hence $A_{gh} = A_gA_h$. The
converse is obvious.
\end{proof}

\begin{lem}
Let $A$ be a strongly graded and commutative quasialgebra, then
$G$ is an abelian group.
\end{lem}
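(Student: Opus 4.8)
The plan is to exploit the fact that in a strongly graded quasialgebra, each homogeneous component $A_g$ contains an \emph{invertible} homogeneous element. Indeed, by Lemma~\ref{lem:strongly_iif}, $1 \in A_gA_{g^{-1}}$ for every $g$, so there exist homogeneous elements whose product is a nonzero scalar multiple of $1$; after rescaling one obtains $u_g \in A_g$ and $u_{g^{-1}} \in A_{g^{-1}}$ with $u_g u_{g^{-1}} = 1$. Using the quasiassociative law \eqref{quassiasociative_condition} one checks that such $u_g$ has a genuine two-sided inverse (up to the cocycle scalars $\phi$, which are nonzero and hence harmless), so $A_g$ is spanned, as an $A_e$-bimodule, by a single invertible element --- in particular every nonzero homogeneous $x_g \in A_g$ is a unit times an element of $A_e$.

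Next I would fix $g,h \in G$ and compare the two products $u_g u_h$ and $u_h u_g$. Both are nonzero homogeneous elements; the first lies in $A_{gh}$ and the second in $A_{hg}$. Since $A$ is commutative, $u_g u_h = u_h u_g$, and this single element is therefore homogeneous of degree $gh$ \emph{and} of degree $hg$. Because the grading is a direct sum decomposition with all components nonzero, a nonzero element cannot simultaneously sit in two distinct components $A_{gh}$ and $A_{hg}$ unless $gh = hg$. Hence $g$ and $h$ commute, and since $g,h$ were arbitrary elements of the support, and the support generates $G$, the group $G$ is abelian.

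The one point that needs a little care --- and what I expect to be the main (minor) obstacle --- is the very first step: producing a genuinely invertible homogeneous element in each $A_g$, rather than just a pair whose product is $1$ on one side. One must verify, using \eqref{quassiasociative_condition} together with the normalization $\phi(g,e,h)=1$ and Lemma~\ref{lem:propcocycle}(i)--(iii), that if $u_g u_{g^{-1}} = 1$ then a suitably rescaled $u_{g^{-1}}$ is also a left inverse, so that $u_g$ is a two-sided unit and $A_g = A_e u_g = u_g A_e$. Once this is in hand (it is essentially the content of the unit-theory reviewed in Section~3 of the paper, so I would cite that), the degree-comparison argument above is immediate and completes the proof. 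Alternatively, one can bypass inverses entirely: pick \emph{any} nonzero $x_g \in A_g$ and $x_h \in A_h$; then $x_g x_h$ need not be nonzero, but strong gradedness gives $A_{gh} = A_g A_h \neq 0$, so \emph{some} product $x_g x_h$ is a nonzero element of $A_{gh}$, which by commutativity equals the nonzero element $x_h x_g \in A_{hg}$, forcing $gh = hg$ as before.
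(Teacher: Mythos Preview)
Your alternative argument at the end is exactly the paper's proof: strong grading gives $A_gA_h = A_{gh} \neq 0$, so some product $x_gx_h$ is nonzero, and commutativity forces this nonzero element to lie in $A_{gh}\cap A_{hg}$, whence $gh=hg$.

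Your primary approach, however, has a real gap (not the ``minor obstacle'' you flag). From $1 \in A_gA_{g^{-1}}$ you only get $1 = \sum_i u_iv_i$ with $u_i\in A_g$, $v_i\in A_{g^{-1}}$; there is no reason any single product $u_iv_i$ should be a scalar multiple of $1$ (or even a unit of $A_e$), so the ``rescale to get $u_gu_{g^{-1}}=1$'' step does not go through. In fact, the existence of a homogeneous unit in every $A_g$ is precisely the definition of a \emph{quasicrossed product}, which is strictly stronger than being strongly graded. Fortunately you never needed invertibility: your alternative bypasses all of this, and that is what the paper does.
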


\begin{proof}
Since $A$ is strongly graded, we have that $A_gA_h = A_{gh} \neq
0$ for any $g, h \in G$. Therefore there exist $x_g \in A_g$ and
$x_h \in A_h$ such that $x_gx_h \neq 0$. From $A$ is commutative,
we have that $x_gx_h = x_hx_g \neq 0$, and this implies $gh = hg$.
\end{proof}

\begin{lem}
Let $A$ be a strongly $G$-graded quasialgebra. If $x \in A$ is
such that $xA_g$ = 0 or $A_gx$ = 0, for some $g \in G$, then $x =
0$.
\end{lem}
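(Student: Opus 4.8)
The plan is to reduce to the case of a homogeneous element and then exploit strong gradedness to write the identity $1$ as a sum of products drawn from $A_g$ and $A_{g^{-1}}$, after which a single application of the quasiassociative law collapses everything to zero.

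First I would treat the case $xA_g = 0$. Write $x = \sum_{h\in G} x_h$ with $x_h \in A_h$. Since $x_h A_g \subseteq A_{hg}$ and the sum $\bigoplus_{h} A_{hg}$ is direct, the relation $xA_g = 0$ forces $x_h A_g = 0$ for every $h \in G$; hence it suffices to prove that each homogeneous component vanishes. Fix $h$ and set $y := x_h$, so $y \in A_h$ and $yA_g = 0$. Because $A$ is strongly graded, $A_gA_{g^{-1}} = A_{gg^{-1}} = A_e$, and $1 \in A_e$, so we may write $1 = \sum_i a_ib_i$ with $a_i \in A_g$ and $b_i \in A_{g^{-1}}$. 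Then
\begin{equation*}
y = y\cdot 1 = \sum_i y(a_ib_i).
\end{equation*}
Applying \eqref{quassiasociative_condition} with degrees $h,g,g^{-1}$ gives $(ya_i)b_i = \phi(h,g,g^{-1})\,y(a_ib_i)$, whence $y(a_ib_i) = \phi(h,g,g^{-1})^{-1}(ya_i)b_i$ since $\phi$ takes values in $\mathbb{K}^{\times}$. But $ya_i \in yA_g = 0$, so $y(a_ib_i) = 0$ for each $i$, and therefore $y = 0$. This proves $x = 0$.

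Finally, the case $A_gx = 0$ is entirely symmetric. Reducing as before to a homogeneous $y \in A_h$ with $A_gy = 0$ (using that the spaces $A_gx_h \subseteq A_{gh}$ lie in distinct homogeneous components), one writes $1 = \sum_i b_ia_i \in A_{g^{-1}}A_g = A_e$ with $b_i \in A_{g^{-1}}$ and $a_i \in A_g$, computes $y = 1\cdot y = \sum_i (b_ia_i)y$, and uses $(b_ia_i)y = \phi(g^{-1},g,h)\,b_i(a_iy)$ together with $a_iy \in A_gy = 0$ to conclude $y = 0$.

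I do not expect a genuine obstacle here. The only points requiring a little care are the homogeneous reduction (that the one-sided annihilation passes to each graded component, which is immediate from directness of the grading) and keeping track of which one-sided factorization of $1$ — as an element of $A_gA_{g^{-1}}$ versus of $A_{g^{-1}}A_g$ — is the one for which the quasiassociativity step produces the factor ($ya_i$, respectively $a_iy$) that is already known to vanish.
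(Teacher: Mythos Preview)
Your proof is correct and follows essentially the same idea as the paper's: use strong gradedness to get $1 \in A_gA_{g^{-1}}$ and then collapse $x$ via quasiassociativity. The paper compresses this into the single line ``$xA_gA_{g^{-1}} = 0$, or equivalently $xA_e = 0$'', whereas you spell out the homogeneous reduction and the invocation of \eqref{quassiasociative_condition}; your version is more careful precisely at the point where the non-associativity could, in principle, cause trouble.
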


\begin{proof}
Let $x \in A$ such that $xA_g = 0$ for some $g \in G $  (the
another case is analogue). We have then $xA_gA_{g^{-1}} = 0$, or
equivalently $xA_e = 0$. From $1 \in A_e$, we conclude that $x =
0$.
\end{proof}

\begin{rmk}
By the above result we have for a strongly $G$-graded quasialgebra
$A$ that always is $A_g \neq 0$, for any $g \in G$, even $g$ does
not belong to the support of the grading.
\end{rmk}

\section{Units of a graded quasialgebra}

\begin{defi} An element $u$ of a graded quasialgebra $A$ is called a
\textit{left unit} if there exists a left inverse $u_L^{-1} \in A$
meaning $u_L^{-1} u=1$. Similarly, $u $ is said a \textit{right
unit} if there exists a right inverse $u_R^{-1} \in A$ meaning
$u u_R^{-1} =1$. By an \textit{unit} (or \textit{invertible element})
 we mean an element $u \in A$ such that has a left and
right inverses. We denote by $U(A)$ the set of all units of $A$.
\end{defi}

\begin{defi} We say that a unit $u$ of $A$ is \textit{graded} if $u \in
A_g $ for some $g \in G$. The set of all graded units of $A$
is denoted by $Gr\; U(A)$ and we have  $Gr\; U(A) =
\bigcup_{g \in G}(U(A) \cap A_g)$.
\end{defi}

\begin{rmk} We have that $U(A)$ is not, in
general, a group because the product does not satisfy the
associative law. The left and right inverses of a unit are
not necessary equals.
\end{rmk}

\begin{lem}\label{lem:gradedunit}
Let $u$ be a graded unit of degree $g $ of a graded quasialgebra
$A $. The following assertions hold.
\begin{enumerate}
\item[(i)] The left and right inverses $u_L^{-1}$ and $u_R^{-1}$
of $u$ have degree $g^{-1}$ and are related by  $u_R^{-1} =
\phi(g^{-1},g,g^{-1}) u_L^{-1}$.
\medskip
\item[(ii)] The left and right inverses $u_L^{-1}, u_R^{-1}$ of
$u$ are unique.
\medskip
\item[(iii)] If $w$ is another graded unit of $A$ of degree $h$,
then the product $uw$ is a graded unit of degree $gh$ such that,
\begin{equation*}
\begin{split}
(uw)_L^{-1}= \frac{ \phi(g^{-1},g,h)}{\phi(h^{-1},g^{-1},gh)
}w_L^{-1}u_L^{-1}, \hspace{0.6cm} (uw)_R^{-1}=
\frac{\phi(h,h^{-1},g^{-1})}{\phi(g,h,h^{-1}g^{-1})}w_R^{-1}u_R^{-1}.
\end{split}
\end{equation*}
\item[(iv)] The set $Gr\; U(A)$ is closed under products and
inverses.
\end{enumerate}
\end{lem}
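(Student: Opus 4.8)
The plan is to work homogeneous-componentwise throughout, exploiting the quasiassociative law \eqref{quassiasociative_condition} together with the cocycle identities collected in Lemma~\ref{lem:propcocycle}. For (i), I would start from $u_L^{-1}u=1$ and $uu_R^{-1}=1$; writing $u_L^{-1}=\sum_k (u_L^{-1})_k$ and comparing degrees (since $1\in A_e$ and $u\in A_g$) forces $(u_L^{-1})_k u=0$ for $k\neq g^{-1}$, and a short argument using that $u$ is also a right unit shows the off-degree parts vanish, so $u_L^{-1}\in A_{g^{-1}}$; likewise $u_R^{-1}\in A_{g^{-1}}$. Then the relation $u_R^{-1}=\phi(g^{-1},g,g^{-1})u_L^{-1}$ comes from computing the triple product $(u_L^{-1}u)u_R^{-1}$ two ways: on one hand it equals $1\cdot u_R^{-1}=u_R^{-1}$, and on the other hand $\phi(g^{-1},g,g^{-1})\,u_L^{-1}(u u_R^{-1})=\phi(g^{-1},g,g^{-1})\,u_L^{-1}$.

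For (ii), uniqueness: if $v u=1=u_L^{-1}u$ with $v\in A_{g^{-1}}$, then $(v-u_L^{-1})u=0$; multiplying on the right by $u_R^{-1}$ and reassociating via \eqref{quassiasociative_condition} (the scalar $\phi(g^{-1},g,g^{-1})$ is invertible) gives $(v-u_L^{-1})\cdot 1=0$, hence $v=u_L^{-1}$; the right-inverse case is symmetric. For (iii), I would verify directly that the proposed element $\frac{\phi(g^{-1},g,h)}{\phi(h^{-1},g^{-1},gh)}\,w_L^{-1}u_L^{-1}$, which lies in $A_{h^{-1}g^{-1}}=A_{(gh)^{-1}}$, is a left inverse of $uw$: expand $\bigl(w_L^{-1}u_L^{-1}\bigr)(uw)$ by repeated application of the quasiassociative law, pushing all brackets to one side and collecting the cocycle factors, then using parts (i) and (ii) of Lemma~\ref{lem:propcocycle} (and in particular identity (ii), $\phi(g,g^{-1},g)\phi(g^{-1},g,h)=\phi(g,g^{-1},gh)$) to simplify the product of $\phi$'s down to the stated quotient. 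The right-inverse formula is the mirror computation, this time invoking Lemma~\ref{lem:propcocycle}(iv).

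Finally (iv) is immediate: closure under inverses holds because if $u\in A_g$ is a unit then $u_L^{-1}\in A_{g^{-1}}$ has right inverse $u$ (from $u_L^{-1}u=1$) and, by (i), left inverse a nonzero scalar multiple of $u$, so $u_L^{-1}\in U(A)\cap A_{g^{-1}}\subset Gr\,U(A)$ — and symmetrically for $u_R^{-1}$; closure under products is exactly the content of (iii), which exhibits explicit left and right inverses for $uw$. I expect the main obstacle to be the bookkeeping in part (iii): the reassociation of a product of three homogeneous factors, one of which is itself a product, generates several $\phi$-factors, and getting them to collapse to precisely $\phi(g^{-1},g,h)/\phi(h^{-1},g^{-1},gh)$ requires choosing the order of bracket-moves carefully and applying the right instance of Lemma~\ref{lem:propcocycle} at each stage; the degree computations in (i) and the uniqueness argument in (ii) are routine by comparison.
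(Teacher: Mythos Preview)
Your proposal is correct and follows essentially the same route as the paper: degree comparison for (i), the triple product $(u_L^{-1}u)u_R^{-1}$ for the relation between left and right inverses, cancellation via $u_R^{-1}$ for uniqueness in (ii), direct reassociation of $(w_L^{-1}u_L^{-1})(uw)$ for (iii), and (iv) as an immediate consequence of (i) and (iii). One minor remark: the left-inverse computation in (iii) needs only two applications of the quasiassociative law and no further cocycle identity to reach the stated quotient, so it is simpler than you anticipate; your observation that Lemma~\ref{lem:propcocycle}(iv) is what reconciles the mirror computation with the stated form of $(uw)_R^{-1}$ is, however, exactly right.
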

\begin{proof}
(i) We show that $u_L^{-1} \in A_{g^{-1}} $ (it is similar for the
right inverse). We may write $u_L^{-1} =\sum _{h \in G}u_h$, where
$u_h \in A_h$ and at most finitely many elements $u_h $ are
nonzero. From $1=u_L^{-1} u=\sum _{h \in G}u_h u$ it follows that
$u_h =0$ unless $h=g^{-1}$. Thus $u_L^{-1}= u_{g^{-1}} $ has
degree $g^{-1}$. The quasiassociativity of $A$ gives
\begin{equation*}
\begin{split}
u_R^{-1} &=1u_R^{-1} = (u_L^{-1} u)u_R^{-1}=\phi(g^{-1},g,g^{-1})
u_L^{-1} (u u_R^{-1})=\phi(g^{-1},g,g^{-1}) u_L^{-1}
\end{split}
\end{equation*}
\noindent as desired (cf. \cite{AS2002,AS2004}).

\medskip

(ii) Suppose that exist $u_L^{-1}$ and ${u'}_L^{-1}$ two left
inverses of $u$, meaning that $u_L^{-1}u = 1$ and ${u'}_L^{-1}u =
1$. Then $u_L^{-1}u = {u'}_L^{-1}u$. Since $u$ is a unit of $A$,
exists $u_R^{-1}$ satisfying $uu_R^{-1} = 1$. We may write
$(u_L^{-1}u)u_R^{-1} = ({u'}_L^{-1}u)u_R^{-1}$, hence
$\phi(g^{-1},g, g^{-1}) u_L^{-1}(u u_R^{-1}) = \phi(g^{-1},g,
g^{-1}) {u'}_L^{-1}(u u_R^{-1})$ and we obtain $u_L^{-1} =
{u'}_L^{-1}$. The case with the right unit is analogue.

\medskip

(iii) As $A_gA_h \subset A_{gh}$ then $uw$ is a homogeneous
element of degree  $gh $. Since $A$ is quasiassociative, we get
the expression of the left inverse of $uw$ doing
\begin{equation*}
\begin{split}
(w_L^{-1}u_L^{-1})(uw)&= \phi(h^{-1},g^{-1},gh) w_L^{-1}\Bigl(
u_L^{-1}(uw)\Bigr) =
\frac{\phi(h^{-1},g^{-1},gh)}{\phi(g^{-1},g,h)}w_L^{-1}\Bigl((u_L^{-1}
u)w\Bigr)\\
&=\frac{\phi(h^{-1},g^{-1},gh)}{\phi(g^{-1},g,h)}w_L^{-1} w=
\frac{\phi(h^{-1},g^{-1},gh)}{\phi(g^{-1},g,h)}.
\end{split}
\end{equation*}
\noindent  In a similar way we obtain the  right inverse of $uw$
(cf. \cite{AS2002,AS2004}).

\medskip

(iv) By (iii) we conclude that $Gr\; U(A)$ is closed under
products. To show that $Gr\; U(A)$ is closed under inverses,
meaning that whenever $u$ is a graded unit  then $u_L^{-1} $ and
$u_R^{-1} $ are graded units too, we use (i) and observe that
$$\Bigl(\phi(g^{-1},g,g^{-1})u\Bigr) u_L^{-1}=uu_R^{-1}=1$$
and $$u_R^{-1} \Bigl(\frac{1}{\phi(g^{-1},g,g^{-1})}u\Bigr)
=u_L^{-1} u=1$$ completing the proof.
\end{proof}
\begin{rmk}\label{remark_4.5}
From Lemma \ref{lem:gradedunit}(i)-(ii), the left and right
inverses of any $u \in U(A) \cap A_g$ are also units of $A$ and
$$(u_L^{-1})_R^{-1} = u, \hspace{2cm} (u_L^{-1})_L^{-1} =
\phi(g^{-1},g,g^{-1})u,$$
$$(u_R^{-1})_L^{-1} = u, \hspace{2cm} (u_R^{-1})_R^{-1} =
\frac{1}{\phi(g^{-1},g,g^{-1})}u.$$
\end{rmk}
\begin{lem} \label{lem:associative_algebra}
In the case $A$ is a graded associative algebra, left and right
inverses are equal.
\end{lem}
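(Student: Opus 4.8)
The plan is to use nothing more than the associativity of $A$, since in the associative case the cocycle $\phi$ is identically $1$ and the quasiassociative law \eqref{quassiasociative_condition} reduces to ordinary associativity. First I would fix an arbitrary unit $u \in U(A)$, together with a left inverse $u_L^{-1}$ satisfying $u_L^{-1} u = 1$ and a right inverse $u_R^{-1}$ satisfying $u u_R^{-1} = 1$; both exist by the very definition of a unit. The desired conclusion is then the single computation obtained by inserting these two identities and re-bracketing:
$$u_L^{-1} = u_L^{-1}\cdot 1 = u_L^{-1}(u\, u_R^{-1}) = (u_L^{-1} u)\, u_R^{-1} = 1\cdot u_R^{-1} = u_R^{-1}.$$

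There is essentially no obstacle here; the only point worth stressing is that associativity is used precisely in the middle equality, and it is exactly the failure of this re-bracketing in the genuinely quasiassociative setting that allows left and right inverses to differ in general. Alternatively, for a graded unit $u$ of degree $g$ one may simply invoke Lemma \ref{lem:gradedunit}(i): the relation $u_R^{-1} = \phi(g^{-1},g,g^{-1})\, u_L^{-1}$ collapses to $u_R^{-1} = u_L^{-1}$ because $\phi\equiv 1$ in the associative case. Either route settles the statement, so I would present the first one as the main argument for its brevity and self-containedness.
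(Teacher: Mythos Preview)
Your proposal is correct and matches the paper's own proof essentially verbatim: the paper also writes $u_L^{-1} = u_L^{-1}(uu_R^{-1}) = (u_L^{-1}u)u_R^{-1} = u_R^{-1}$, using associativity in the middle step. Your additional remark about deducing the graded case from Lemma~\ref{lem:gradedunit}(i) with $\phi\equiv 1$ is a nice observation but not needed here.
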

\begin{proof}
It is easy to chek that $u_L^{-1} = u_L^{-1}(uu_R^{-1}) =
(u_L^{-1}u)u_R^{-1}  = u_R^{-1}$ for any $u
\in U(A)$.
\end{proof}

\begin{cor} \label{cor:gradedunitee}
In the case $u \in U(A)\cap A_e$ then its left and right inverses
are equal and belong to $A_e$. Moreover, $U(A) \cap A_e = U(A_e)$.
\end{cor}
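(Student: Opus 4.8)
The plan is to read everything off Lemma~\ref{lem:gradedunit} together with the normalization of cocycles in Lemma~\ref{lem:propcocycle}. First I would take $u\in U(A)\cap A_e$, that is, a graded unit of degree $g=e$. By Lemma~\ref{lem:gradedunit}(i) its left and right inverses $u_L^{-1}$ and $u_R^{-1}$ are homogeneous of degree $e^{-1}=e$, hence both belong to $A_e$, and they satisfy $u_R^{-1}=\phi(e,e,e)\,u_L^{-1}$. Now $\phi(e,e,e)=1$ by Lemma~\ref{lem:propcocycle}(i) (equivalently, by \eqref{eq:cocyclecondi}), so $u_L^{-1}=u_R^{-1}$; denote this common element by $u^{-1}\in A_e$. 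This already gives the first assertion, namely that for $u\in U(A)\cap A_e$ the two one-sided inverses coincide and lie in $A_e$.

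For the equality $U(A)\cap A_e=U(A_e)$ I would prove the two inclusions separately. If $u\in U(A)\cap A_e$, then by the previous step $u^{-1}\in A_e$ and $u^{-1}u=uu^{-1}=1$ with $1\in A_e$ (Remark~\ref{rmk:hfvgyug}), so $u$ is invertible in the associative unital algebra $A_e$, i.e.\ $u\in U(A_e)$. Conversely, if $u\in U(A_e)$ there is $v\in A_e\subseteq A$ with $uv=vu=1$; thus $v$ is simultaneously a left and a right inverse of $u$ in $A$, so $u\in U(A)$, and since $u\in A_e$ we conclude $u\in U(A)\cap A_e$.

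I do not expect any genuine obstacle here. The only point that requires a little care is verifying $\phi(e,e,e)=1$, which is precisely what forces the two one-sided inverses sitting in $A_e$ to be equal; once that is in place the rest is a routine unwinding of the definitions, using that $A_e$ is an associative algebra with $1\in A_e$ as recorded in Remark~\ref{rmk:hfvgyug}.
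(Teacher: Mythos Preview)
Your proof is correct and follows essentially the same approach as the paper: both invoke Lemma~\ref{lem:gradedunit}(i) with $g=e$ to get $u_L^{-1},u_R^{-1}\in A_e$ and $u_R^{-1}=\phi(e,e,e)u_L^{-1}=u_L^{-1}$, then deduce $U(A)\cap A_e\subseteq U(A_e)$ and note the reverse inclusion is trivial. Your write-up simply spells out the details a bit more fully.
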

\begin{proof}
By Lemma \ref{lem:gradedunit}(i), the left and right inverses of
$u$ belong to $A_e$ and $u_R^{-1} =\phi(e,e,e) u_L^{-1}=u_L^{-1}$.
Therefore $U(A) \cap A_e \subseteq U(A_e)$. The reciprocal
is trivial.
\end{proof}

\begin{rmk}
The map $deg : Gr \; U(A) \rightarrow G$ preserves the multiplication and the elements
$u \in Gr \; U(A) $ such that $ deg \: u = e$ are the set $U(A) \cap A_e =U(A_e)$.
\end{rmk}


\begin{lem}\label{lem:gradedunit2}
\begin{enumerate}

\item[(i)] The map $\mu : Gr \; U(A) \rightarrow Aut(A_e)$
defined by
\begin{equation*}
\begin{split}
\mu(u)(x) = uxu_R^{-1}& \mbox{ for any } u \in Gr \; U(A) \mbox{
and } x \in A_e,
\end{split}
\end{equation*}
satisfies $\mu(uw) = \mu(u) \circ \mu(w)$ for all $u,w \in Gr \;
U(A)$.

\item[(ii)] Right multiplication by any $u \in A_g \cap U(A)$ is
an isomorphism $$A_e \rightarrow A_eu = A_g$$ of left
$A_e$-modules.
\end{enumerate}
\end{lem}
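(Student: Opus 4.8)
The plan is to handle the two items separately. Item~(ii) is a direct reassociation argument; item~(i) requires first checking that each $\mu(u)$ is an algebra automorphism of $A_e$ and then reducing the homomorphism property of $\mu$ to the cocycle identities of Lemma~\ref{lem:propcocycle}.

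For (ii), fix $u\in A_g\cap U(A)$; by Lemma~\ref{lem:gradedunit}(i) its right inverse $u_R^{-1}$ lies in $A_{g^{-1}}$ and $u_R^{-1}u=\phi(g^{-1},g,g^{-1})\,1$ (from $u_R^{-1}=\phi(g^{-1},g,g^{-1})u_L^{-1}$ and $u_L^{-1}u=1$). Right multiplication $R_u\colon A_e\to A_g$, $x\mapsto xu$, clearly satisfies $A_eu\subseteq A_eA_g\subseteq A_g$ and is left $A_e$-linear (for the bimodule structure of Remark~\ref{rmk:hfvgyug}), since $(ax)u=a(xu)$ for $a,x\in A_e$ because the associator $\phi(e,e,g)$ equals $1$ by Lemma~\ref{lem:propcocycle}(i). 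I would get injectivity from $xu=0\ (x\in A_e)\Rightarrow x=(xu)u_R^{-1}=0$, reassociating $(x,u,u_R^{-1})$ and using $\phi(e,g,g^{-1})=1$; and surjectivity (which simultaneously yields $A_eu=A_g$) by checking that $z\in A_g$ is the image of $x:=zu_R^{-1}\in A_gA_{g^{-1}}\subseteq A_e$, since reassociating $(z,u_R^{-1},u)$ gives $xu=\phi(g,g^{-1},g)\phi(g^{-1},g,g^{-1})\,z=z$ by Lemma~\ref{lem:propcocycle}(iii). Hence $R_u$ is an isomorphism of left $A_e$-modules onto $A_eu=A_g$.

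For (i), with $\deg u=g$ one has $uxu_R^{-1}\in A_gA_eA_{g^{-1}}\subseteq A_e$ for $x\in A_e$, and the bracketing is immaterial since $\phi(g,e,g^{-1})=1$ by \eqref{eq:cocyclecondi}; thus $\mu(u)$ is a well-defined $\mathbb{K}$-linear map with $\mu(u)(1)=uu_R^{-1}=1$. The key computation is the identity $(uxu_R^{-1})u=ux$ for $x\in A_e$, obtained by reassociating $(u,xu_R^{-1},u)$ and then $(x,u_R^{-1},u)$ and using $u_R^{-1}u=\phi(g^{-1},g,g^{-1})\,1$ together with Lemma~\ref{lem:propcocycle}(iii); granting it, $\mu(u)(x)\mu(u)(y)=(uxu_R^{-1})(uyu_R^{-1})=\bigl((uxu_R^{-1})u\bigr)(yu_R^{-1})=(ux)(yu_R^{-1})=\bigl((ux)y\bigr)u_R^{-1}=u(xy)u_R^{-1}=\mu(u)(xy)$, all regroupings involving only trivial associators of the forms $\phi(g,e,\cdot)$ and $\phi(e,\cdot,\cdot)$. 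So $\mu(u)$ is an algebra endomorphism of $A_e$.

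Finally I would prove $\mu(uw)=\mu(u)\circ\mu(w)$ for graded units $u,w$ of degrees $g,h$, which also forces each $\mu(u)$ to be bijective. Reassociating $\mu(u)\bigl(\mu(w)(x)\bigr)=u(wxw_R^{-1})u_R^{-1}$ step by step --- peeling $u$ past $wx$ (associator $\phi(g,h,e)=1$), then past $w_R^{-1}$ (associator $\phi(g,h,h^{-1})$), then regrouping the triple $\bigl((uw)x,\,w_R^{-1},\,u_R^{-1}\bigr)$ (associator $\phi(gh,h^{-1},g^{-1})$) --- yields $\frac{\phi(gh,h^{-1},g^{-1})}{\phi(g,h,h^{-1})}\bigl((uw)x\bigr)(w_R^{-1}u_R^{-1})$; while $\mu(uw)(x)=(uw)x\,(uw)_R^{-1}$ together with $(uw)_R^{-1}=\frac{\phi(h,h^{-1},g^{-1})}{\phi(g,h,h^{-1}g^{-1})}w_R^{-1}u_R^{-1}$ from Lemma~\ref{lem:gradedunit}(iii) gives $\frac{\phi(h,h^{-1},g^{-1})}{\phi(g,h,h^{-1}g^{-1})}\bigl((uw)x\bigr)(w_R^{-1}u_R^{-1})$. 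These coincide exactly because $\phi(gh,h^{-1},g^{-1})\phi(g,h,h^{-1}g^{-1})=\phi(h,h^{-1},g^{-1})\phi(g,h,h^{-1})$, i.e.\ Lemma~\ref{lem:propcocycle}(iv). Taking $w=u_R^{-1}$ and using $uu_R^{-1}=1$, $u_R^{-1}u=\phi(g^{-1},g,g^{-1})\,1$, and the obvious $\mu(\lambda 1)=\mathrm{id}_{A_e}$ for any scalar $\lambda$, one gets $\mu(u)\circ\mu(u_R^{-1})=\mu(u_R^{-1})\circ\mu(u)=\mathrm{id}_{A_e}$, so $\mu(u)$ is invertible, hence an automorphism, and $\mu(uw)=\mu(u)\circ\mu(w)$ holds in $\mathrm{Aut}(A_e)$. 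The only real difficulty throughout is the cocycle bookkeeping in these reassociations, engineered so that the surviving scalar relations are precisely Lemma~\ref{lem:propcocycle}(iii) and~(iv).
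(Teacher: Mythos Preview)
Your proof is correct and follows essentially the same route as the paper: for (ii) you use the same injectivity/surjectivity argument via $u_R^{-1}$, and for the identity $\mu(uw)=\mu(u)\circ\mu(w)$ you perform the same chain of reassociations reducing to Lemma~\ref{lem:propcocycle}(iv), only traversed in the opposite direction. The one substantive addition is that you actually verify $\mu(u)\in\mathrm{Aut}(A_e)$ (multiplicativity via the auxiliary identity $(uxu_R^{-1})u=ux$, and bijectivity by specializing $w=u_R^{-1}$); the paper's proof of this lemma omits that check and establishes multiplicativity separately later in Lemma~\ref{sigma_product}, so your argument is self-contained where the paper's is not.
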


\begin{proof}
(i) Let $u, w$ be two graded units of $A$ such that $deg \: u =
g$ and $ deg \: w = h$. Using Lemma \ref{lem:propcocycle}(iv) we
obtain for any $x \in A_e$,
\begin{equation*}
\begin{split}
\mu(uw)(x) &= (uw)x(uw)_R^{-1} =
(uw)x\frac{\phi(h,h^{-1},g^{-1})}{\phi(g,h,h^{-1}g^{-1})}(w_R^{-1}u_R^{-1})\\
& =
\frac{\phi(h,h^{-1},g^{-1})}{\phi(g,h,h^{-1}g^{-1})}(uw)\Bigl(x(w_R^{-1}u_R^{-1})\Bigr)\\
&
=\frac{\phi(h,h^{-1},g^{-1})}{\phi(g,h,h^{-1}g^{-1})}(uw)\Bigl((xw_R^{-1})u_R^{-1}\Bigr)\\
& =
\frac{\phi(h,h^{-1},g^{-1})}{\phi(g,h,h^{-1}g^{-1})\phi(gh,h^{-1},g^{-1})}\Bigl((uw)(xw_R^{-1})\Bigr)u_R^{-1}\\
& =
\frac{\phi(h,h^{-1},g^{-1})\phi(g,h,h^{-1})}{\phi(g,h,h^{-1}g^{-1})\phi(gh,h^{-1},g^{-1})}
\Bigl(u(wxw_R^{-1})\Bigr)u_R^{-1} \\
& = u(wxw_R^{-1})u_R^{-1} = \mu(u) \circ \mu (w)(x)
\end{split}
\end{equation*}

(ii) First we prove that the right multiplication is a
monomorphism. Let $x,y \in A_e$ such that $xu = yu$. Thus
$(xu)u_R^{-1} =(yu)u_R^{-1}$. Since $x,y \in A_e$ then
$x(uu_R^{-1}) =y(uu_R^{-1})$ and $x = y$. To prove that it
is an epimorphism, we need to see if for any $v \in A_g$ exists $x
\in A_e$ such that $xu = v$. We get it taking $x = vu_R^{-1}$.
\end{proof}

\section{Quasicrossed products and quasicrossed systems}
In this section we introduce the concept of quasicrossed product in the
context of graded quasialgebras.
\begin{defi}
Let $A$ be a $G$-graded quasialgebra. We say that $A$ is a
\textit{quasicrossed product} of $G$ over $A_e$ if for any $g \in G$
exists $\overline{g} \in U(A) \cap A_g$, meaning that, there
exists an unit  $\overline{g}$ in $A$ of any degree $g$.
\end{defi}
\noindent The following examples illustrate that some
important   quasialgebras  are quasicrossed product.
\begin{exa}
Any division graded quasialgebra $A = \oplus_{g \in G}A_g$ is
trivially a quasicrossed pro\-duct of $G$ over $A_e$, because $1 \in
A_e$ and every nonzero homogeneous element is invertible.
\end{exa}

\begin{exa} \label{exa K_FG} Interesting examples of division graded quasialgebras, so of quasicrossed products, are deformed group algebras $\mathbb{K}_FG$ (see \cite{AM1999}). We present properly this class of algebras since we will pay special attention to them in this paper.
Consider the \emph{group algebra} $\mathbb{K}G$, the set of all
linear combinations of elements $\sum_{g \in G}a_gg$, where $a_g
\in \mathbb{K}$ such that $a_g = 0$ for all but
finitely many elements $g$. We define $\mathbb{K}_FG$
with the same underlying vector space as $\mathbb{K}G$ but with a modified
product $g.h := F(g,h)gh$, for any $g,h \in G$, where $F$ is a
2-cochain on $G$. Then $\mathbb{K}_FG$ is a coboundary graded
quasialgebra. Moreover, any $\mathbb{K}_FG$ is a quasicrossed product.
In fact, given $g \in G$ and $a_g \in \mathbb{K}^{\times}$ then
the homogeneous element $a_gg \in (\mathbb{K}_FG)_g$ is an unit with
left inverse and right inverse:
\begin{equation*}
\begin{split}
(a_gg)_L^{-1} = \phi(g,g^{-1},g)(a_gg)_R^{-1} =
F(g^{-1},g)^{-1} a_g^{-1} g^{-1}.
\end{split}
\end{equation*}
There are two classes of modified group algebras particularly
interesting, namely the Cayley algebras and the Clifford algebras. We
mention just some well studied Cayley algebras:

\begin{enumerate}
\item The complex algebra $\mathbb{C}$ is a $\mathbb{K}_FG$
quasialgebra with $G = \mathbb{Z}_2$ and $F(x,y) = (-1)^{xy}$, for
$x,y \in \mathbb{Z}_2$.

\item The quaternion algebra $\mathbb{H}$ is a $\mathbb{K}_FG$
quasialgebra with $G = \mathbb{Z}_2 \times \mathbb{Z}_2$ and $
F(\overrightarrow{x},\overrightarrow{y}) =
(-1)^{x_1y_1+(x_1+x_2)y_2}$, where $\overrightarrow{x} = (x_1,x_2)
\in \mathbb{Z}_2 \times \mathbb{Z}_2$ is a vector notation.

\item The octonion algebra $\mathbb{O}$ is another $\mathbb{K}_FG$
algebra for $G = \mathbb{Z}_2 \times \mathbb{Z}_2 \times
\mathbb{Z}_2$ and $F(\overrightarrow{x},\overrightarrow{y}) =
(-1)^{\sum_{i \leq j}x_iy_j+y_1x_2x_3+x_1y_2x_3+x_1x_2y_3}$, where
$\overrightarrow{x} = (x_1,x_2,x_3) \in \mathbb{Z}_2 \times
\mathbb{Z}_2 \times \mathbb{Z}_2$.
\end{enumerate}
Any Clifford algebra is another quasialgebra of type
$\mathbb{K}_FG$ for $G = (\mathbb{Z}_2)^n$ and $2$-cochain
$F(\overrightarrow{x},\overrightarrow{y}) = (-1)^{\sum_{i \leq
j}x_iy_j}$ where $\overrightarrow{x} = (x_1,...,x_n) \in
(\mathbb{Z}_2)^n$. Recall that $\mathbb{C}$ and $\mathbb{H}$ are
both Cayley and Clifford algebras.

\end{exa}

\begin{exa}
 Let $\displaystyle
\mbox{Mat}_n(\Delta)$ be the $\displaystyle \mathbb{Z}_2$-graded
algebra  of the $n\times n$ matrices over $\Delta$ with the
natu\-ral $\displaystyle \mathbb{Z}_2$-gradation inherited from
$\displaystyle \Delta$, where $\displaystyle
{\Delta}={\Delta}_{\bar 0} \oplus {\Delta}_{\bar 1}$ is   a
division antiassociative quasialgebra  ( $\simeq  \langle D,
\sigma, a \rangle$ see \cite{AEPI2002})  and $n \in \mathbb{N}$.
 Consider $\displaystyle
\mbox{Mat}_n(\Delta)= \mbox{Mat}_n(\Delta_{\bar 0})\oplus
\mbox{Mat}_n(\Delta_{\bar 0})u$  equipped with multiplication
defined by
\begin{eqnarray*}
\displaystyle
&&A(Bu)=(AB)u\\
&&(Au)B=(A\overline{B})u ,\\
&&(Au)(Bu)=aA\overline{B}, \hspace*{1cm} \mbox{for all}
\hspace*{0.2cm} {A,B \in \mbox{Mat}_n(\Delta_{\bar 0})},
\end{eqnarray*}
where the matrix $\displaystyle \overline{B}$ is obtained from the
matrix $\displaystyle B=[b_{ij}]_{1 \leq i,j\leq n}$ by replacing
the term $\displaystyle b_{ij}$ by $\displaystyle \sigma (
b_{ij})$, for all $\displaystyle i,j \in \{1 ,\ldots, n\}$. Then
the simple antiassociative quasialgebra $\displaystyle
\mbox{Mat}_n(\Delta)$ is clearly a quasicrossed product of
$\mathbb{Z}_2$ over $\displaystyle \mbox{Mat}_n(\Delta_{\bar 0})$. It is clear that $id$ is an unit in $\mbox{Mat}_n(\Delta_{\bar 0})$ and $id u$ is an unit in $\mbox{Mat}_n(\Delta_{\bar 0})u$.

For $n,m \in \mathbb{N}$, the set
$\widetilde{Mat}_{n,m}(D)$ of $(n+m) \times (n+m)$ matrices over a
division algebra $D$, with the chess board $\mathbb{Z}_2$-grading:
\begin{eqnarray*}
&&  \displaystyle {\widetilde{\mbox{Mat}}_{n,m}(D)}_{\bar
0}:=\left\{ \left(
                                                    \begin{array}{cc}
                                                      a & 0 \\
                                                      0 & b \\
                                                    \end{array}
                                                  \right) :
                                                  a \in
\mbox{Mat}_{n}(D), b \in  \mbox{Mat}_{m}(D)
\right\}\\
&&{\widetilde{\mbox{Mat}}_{n,m}(D)}_{\bar 1}:=\left\{ \left(
                                                    \begin{array}{cc}
                                                     0 & v \\
                                                      w & 0 \\
                                                    \end{array}
                                                  \right):
                                                  v \in
\mbox{Mat}_{n\times m}(D), w \in \mbox{Mat}_{m\times n}(D)
\right\},
\end{eqnarray*}
and with multiplication given by
\begin{equation*}
\begin{split}
\left(%
\begin{array}{cc}
 a_1 & v_1 \\
 w_1 & b_1 \\
\end{array}%
\right) \cdot \left(%
\begin{array}{cc}
 a_2 & v_2 \\
 w_2 & b_2 \\
\end{array}%
\right) = \left(%
\begin{array}{cc}
 a_1a_2+v_1w_2 & a_1v_2+v_1b_2 \\
 w_1a_2+b_1w_2 & -w_1v_2+b_1b_2 \\
\end{array}%
\right)
\end{split}
\end{equation*}
is a quasicrossed product. Indeed, let $a \in Mat_n(D)$ and $ b \in
Mat_m(D)$ be two invertible matrices,
then $\left(%
\begin{array}{cc}
 a & 0 \\
 0 & b \\
\end{array}%
\right)$ is an unit in $\widetilde{Mat}_{n,m}(D)_{\overline{0}}$
with
\begin{equation*}
\begin{split}
\left(%
\begin{array}{cc}
 a & 0 \\
 0 & b \\
\end{array}%
\right)_R^{-1} = \left(%
\begin{array}{cc}
 a & 0 \\
 0 & b \\
\end{array}%
\right)_L^{-1} = \left(%
\begin{array}{cc}
 a^{-1} & 0 \\
 0 & b^{-1} \\
\end{array}%
\right).
\end{split}
\end{equation*}
Also, if $v \in Mat_{n \times m}(D)$ and $ w \in Mat_{m \times
n}(D)$
are invertible matrices, then $\left(%
\begin{array}{cc}
 0 & v \\
 w & 0 \\
\end{array}%
\right)$ is an unit in $\widetilde{Mat}_{n,m}(D)_{\overline{1}}$
with
\begin{equation*}
\begin{split}
\left(%
\begin{array}{cc}
 0 & v \\
 w & 0 \\
\end{array}%
\right)_R^{-1} = -\left(%
\begin{array}{cc}
 0 & v \\
 w & 0 \\
\end{array}%
\right)_L^{-1} = \left(%
\begin{array}{cc}
 0 & -w^{-1} \\
 v^{-1} & 0 \\
\end{array}%
\right)
\end{split}
\end{equation*}
Actually, any unital antiassociative quasialgebra with semisimple even part, except the trivial one, is a quasicrossed product, because it is a finite direct sum of ideals of the form $\displaystyle
\mbox{Mat}_{n_i}(\Delta^i)$ for some number $n_i$ and some division antiassociative quasialgebra  $\Delta^i$, and ideals of the form  $\widetilde{Mat}_{n_j,m_j}(D^j)$ for some
division algebra $D^j$ and some numbers $n_j$ and $m_j$ (see \cite[Theorem 10]{AEPI2002}). As naturally expected, these examples show that there are quasicrossed products which are not division graded quasialgebras.
\end{exa}

\begin{exa} \label{exa:deformed.matrices}
Consider the $\mathbb{Z}_n$-graded quasialgebra  of the deformed matrices $M_{n,\phi}(\mathbb{K})$  of the
usual $n \times n$ matrices $M_n(\mathbb{K})$ with the basis elements
$E_{ij}$ of degree $j - i$, for $i, j \in \mathbb{Z}_n$, and the multiplication
\begin{equation*}
\begin{split}
(X \cdot Y)_{ij} = \sum\limits_{k=1}^n
\frac{\phi(i,-k,k -j)}{\phi(-k,k,-j)}X_{ik}Y_{kj},
\end{split}
\end{equation*}
for any $X=(X_{ij})$ and $Y=(Y_{ij})$ in  $   M_n (\mathbb{K})$ (cf. in \cite{AS2006}). This $\mathbb{Z}_n$-graded quasialgebra  is a quasicrossed product. Indeed, we easily find an invertible element in  each homogeneous component of $ M_{n,\phi}(\mathbb{K}) $.
\end{exa}

\begin{rmk}
Observe that not all graded quasialgebras are quasicrossed products. For example, we can easily extract subalgebras of the algebra of  Example \ref{exa:deformed.matrices} which are not quasicrossed products. The subalgebra $T_{n,\phi}(\mathbb{K})$
of the $\mathbb{Z}_n$-graded quasialgebra $M_{n,\phi}(\mathbb{K})$ formed by the upper triangular matrices  is not a quasicrossed product. For example, the 1-dimensional homogeneous component  $(T_{n,\phi}(\mathbb{K}))_{n-1}$ with basis $\{ E_{1n}\} $ does not contain an invertible element.
\end{rmk}
\begin{defi}\label{def_quasicrossed_mapping}
Assume that $B$ is an associative algebra. Given maps
$$\sigma : G \rightarrow Aut(B)\hspace{1cm} \mbox{ \textit{automorphism
system}}$$
$$\alpha : G \times G \rightarrow U(B) \hspace{1cm} \mbox{ \textit{quasicrossed
mapping}}$$ and a cocycle $\phi : G \times G \times G
\longrightarrow \mathbb{K}^{\times}$, we say that
$(G,B,\phi,\sigma,\alpha)$ is a \textit{quasicrossed system} for $G$
over $B$ if the following properties hold:
\begin{eqnarray}
&&\sigma(g)\Bigl(\sigma(h)(x)\Bigr) =
\alpha(g,h)\sigma(gh)(x)\alpha(g,h)^{-1} \label{cond_quasicrossed_system1} \\
&&\alpha(g,h)\alpha(gh,k) =
\phi(g,h,k)\sigma(g)(\alpha(h,k))\alpha(g,hk)
\label{cond_quasicrossed_system2} \\
&&\alpha(g,e) = \alpha(e,g) = 1 \label{cond_quasicrossed_system3}
\end{eqnarray}
for any $g,h,k \in G$ and $x \in B$.
\end{defi}

\noindent Let $A$ be a graded quasialgebra which is a quasicrossed
product of $G$ over $A_e$. Then for any $g \in G$ exists an unit
$\overline{g} \in U(A) \cap A_g$ with $\overline{e} = 1$, and
define a map $\sigma(g) : A_e \longrightarrow A_e$ by
\begin{equation}\label{eq:sigma_condition}
\sigma(g)(x) := \overline{g}x{\overline{g}_R}^{-1} \hspace{0.6cm}
\mbox{for any }x \in A_e.
\end{equation}

\begin{lem}\label{sigma_product}
For any $g \in G$, $\sigma(g)$ is an automorphism of $A_e$, meaning that for $x,y \in A_e$
\begin{equation*}
\begin{split}
&\sigma(g)(xy) = \sigma(g)(x)\sigma(g)(y).
\end{split}
\end{equation*}
\end{lem}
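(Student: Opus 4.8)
The plan is a direct computation of $\sigma(g)(x)\sigma(g)(y) = (\overline{g}\,x\,\overline{g}_R^{-1})(\overline{g}\,y\,\overline{g}_R^{-1})$ from the quasiassociative law \eqref{quassiasociative_condition}, using from Lemma \ref{lem:gradedunit}(i) only that $\overline{g}\in A_g$ and $\overline{g}_R^{-1}\in A_{g^{-1}}$. Since $x,y\in A_e$, every reassociation of a triple of homogeneous factors that occurs has degree pattern $(g,e,g^{-1})$, $(e,g,g^{-1})$, $(e,e,g^{-1})$, or $(g,g^{-1},g)$; the first three carry cocycle value $1$ by \eqref{eq:cocyclecondi} and Lemma \ref{lem:propcocycle}(i) (so, in particular, the expression $\overline{g}\,x\,\overline{g}_R^{-1}$ in \eqref{eq:sigma_condition} is unambiguous), and the only reassociation that costs anything is of type $(g,g^{-1},g)$.

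First I would isolate the auxiliary identity
\[
(\overline{g}\,x\,\overline{g}_R^{-1})\,\overline{g} \;=\; \overline{g}\,x \qquad (x\in A_e).
\]
Reassociating the left-hand side as a product of the homogeneous factors $\overline{g}x\in A_g$, $\overline{g}_R^{-1}\in A_{g^{-1}}$, $\overline{g}\in A_g$ gives $\phi(g,g^{-1},g)\,(\overline{g}x)(\overline{g}_R^{-1}\overline{g})$. Here $\overline{g}_R^{-1}\overline{g}=\phi(g^{-1},g,g^{-1})$, a scalar: indeed $\overline{g}_R^{-1}=\phi(g^{-1},g,g^{-1})\,\overline{g}_L^{-1}$ by Lemma \ref{lem:gradedunit}(i), and $\overline{g}_L^{-1}\overline{g}=1$. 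Since $\phi(g,g^{-1},g)\phi(g^{-1},g,g^{-1})=1$ by Lemma \ref{lem:propcocycle}(iii), the scalar disappears and the identity follows.

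With this in hand the main chain is short. Rewrite $\overline{g}\,y\,\overline{g}_R^{-1}=\overline{g}(y\overline{g}_R^{-1})$ and reassociate so that $\overline{g}$ is grouped with the first factor (degree pattern $(e,g,g^{-1})$, value $1$), obtaining $\sigma(g)(x)\sigma(g)(y)=\big((\overline{g}\,x\,\overline{g}_R^{-1})\overline{g}\big)(y\overline{g}_R^{-1})$; apply the auxiliary identity to reach $(\overline{g}x)(y\overline{g}_R^{-1})$; reassociate once more (type $(g,e,g^{-1})$, value $1$) to reach $\overline{g}\big(x(y\overline{g}_R^{-1})\big)$; and finish with $x(y\overline{g}_R^{-1})=(xy)\overline{g}_R^{-1}$, valid because $A_{g^{-1}}$ is an associative $A_e$-bimodule (Remark \ref{rmk:hfvgyug}). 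The outcome is $\overline{g}(xy)\overline{g}_R^{-1}=\sigma(g)(xy)$.

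The only genuine obstacle is the cocycle bookkeeping: one must split the triple products consistently and recognise that the two factors of type $\phi(g,g^{-1},g)$ and $\phi(g^{-1},g,g^{-1})$ that appear are mutually inverse by Lemma \ref{lem:propcocycle}(iii), while every other reassociation is forced to have value $1$; no further input is needed.
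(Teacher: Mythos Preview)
Your proof is correct and is essentially the same computation as the paper's, just run in the opposite direction: the paper starts from $\sigma(g)(xy)$, inserts $\overline{g}_L^{-1}\overline{g}=1$, and uses Lemma~\ref{lem:gradedunit}(i) together with Lemma~\ref{lem:propcocycle}(iii) to reach $\sigma(g)(x)\sigma(g)(y)$, whereas you start from the product and collapse $\overline{g}_R^{-1}\overline{g}$ via the same two lemmas. The key cocycle cancellation $\phi(g,g^{-1},g)\phi(g^{-1},g,g^{-1})=1$ and the triviality of $\phi$ with an $e$-argument are exactly the ingredients used in both arguments.
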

\begin{proof}
For any $g \in G$, as $\overline{g}$ is a unit it is obvious that the map $\sigma(g)$ is bijective.
Applying Lemma \ref{lem:gradedunit}(i), we obtain for any $g \in G$ and $x,
y \in A_e$
\begin{equation*}
\begin{split}
\sigma(g)(xy)&= \overline{g}xy\overline{g}_R^{-1} =
\Bigl(\overline{g}x(\overline{g}_L^{-1}\overline{g})\Bigr)y\overline{g}_R^{-1}
=
\frac{1}{\phi(g,g^{-1},g)}\Bigl((\overline{g}x\overline{g}_L^{-1})\overline{g}\Bigr)y\overline{g}_R^{-1}\\
& =
\frac{1}{\phi(g,g^{-1},g)}(\overline{g}x\overline{g}_L^{-1})(\overline{g}y\overline{g}_R^{-1})
=(\overline{g}x\overline{g}_R^{-1})(\overline{g}y\overline{g}_R^{-1})
=\sigma(g)(x)\sigma(g)(y)
\end{split}
\end{equation*}
as desired.
\end{proof}

\begin{pro}\label{Prop_page_176}
Let $A$ be a graded quasialgebra  which is a quasicrossed product of
$G$ over $A_e$. For any $g \in G$, fix a unit $\overline{g} $ in $
A_g$ with $\overline{e} = 1$. Let $\sigma: G \rightarrow Aut(A_e)$
be the corresponding automorphism system given by Equation
\eqref{eq:sigma_condition} and  $\alpha : G \times G
\rightarrow U(A_e)$  be defined by

\begin{equation}\label{alpha_condition}
\alpha(g,h) =(\overline{g}\hspace{0.1cm}\overline{h})
(\overline{gh})_R^{-1}=
\phi((gh)^{-1},gh,(gh)^{-1})(\overline{g}\hspace{0.1cm}\overline{h})
(\overline{gh})_L^{-1},
\end{equation}
for any $g,h \in G$. Then the following properties hold:
\begin{enumerate}
\item[(i)] $A$ is a strongly graded quasialgebra with $A_g =
A_e\overline{g} = \overline{g}A_e$.

\item[(ii)] $(G,A_e,\phi,\sigma,\alpha)$ is a quasicrossed system for
$G$ over $A_e$ (to which we refer as corres\-ponding to $A$).

\item[(iii)] $A$ is a free (left or right) $A_e$-module freely
generated by the elements $\overline{g}$, where $g \in G$.

\item[(iv)] For all $g, h \in G$ and $x,y \in A_e$,
\begin{equation}\label{item4}
(x\overline{g})(y\overline{h}) =
x\sigma(g)(y)\alpha(g,h)\overline{gh}.
\end{equation}
\end{enumerate}
Conversely, for any associative algebra $B$ and any quasicrossed system
$(G,B,\phi, \sigma,\alpha)$ for $G$ over $B$, the free $B$-module
$C$ freely generated by the elements $\overline{g}$, for $g \in
G$, with multiplication given by Equation \eqref{item4} (with $x,y
\in B)$ is a $G$-graded quasialgebra (with $C_g = B\overline{g}$
for all $g \in G$) which is a quasicrossed product of $G$ over $C_e =
B$ and having $(G,B,\phi,\sigma,\alpha)$ as a corresponding
quasicrossed system.
\end{pro}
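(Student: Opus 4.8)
The plan is to prove the two directions separately. For the direct part, assume $A$ is a quasicrossed product with chosen units $\overline{g}\in U(A)\cap A_g$ (with $\overline{e}=1$), and first establish (i): by Lemma~\ref{lem:gradedunit2}(ii) right multiplication by $\overline{g}$ is an isomorphism $A_e\to A_e\overline{g}$ of left $A_e$-modules, and since $\overline{g}$ is homogeneous of degree $g$ we get $A_e\overline{g}\subseteq A_g$; conversely any $v\in A_g$ equals $(v\overline{g}_R^{-1})\overline{g}$ with $v\overline{g}_R^{-1}\in A_g A_{g^{-1}}\subseteq A_e$, so $A_g=A_e\overline{g}$, and symmetrically $A_g=\overline{g}A_e$. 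Then $A_gA_h = A_e\overline{g}\,\overline{h}A_e \ni \overline{g}\,\overline{h}$, and since $\overline{g}\,\overline{h}$ is a graded unit of degree $gh$ (Lemma~\ref{lem:gradedunit}(iii)), $1\in A_gA_h(A_gA_h)^{-1}$-type reasoning together with Lemma~\ref{lem:strongly_iif} gives strong gradedness; more directly $A_{gh}=A_e\overline{gh}=A_e\alpha(g,h)\overline{gh}\subseteq A_e\overline{g}\,\overline{h}A_e=A_gA_h$ once (iv) is known. Part (iii) is immediate from (i): since right multiplication by $\overline{g}$ is an $A_e$-module isomorphism $A_e\to A_g$, the decomposition $A=\bigoplus_g A_g$ exhibits $A$ as free with basis $\{\overline{g}\}$ over $A_e$ on either side.

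Next I would prove (iv), the key computational identity. Starting from $(x\overline{g})(y\overline{h})$ with $x,y\in A_e$, I rewrite $y = \overline{g}_L^{-1}(\overline{g}y\overline{g}_R^{-1})\overline{g}\cdot(\text{correction})$; more cleanly, using quasiassociativity to move $x$ out (all degrees in $A_e$ so the relevant $\phi$ values are $1$ by Lemma~\ref{lem:propcocycle}(i)) one gets $(x\overline{g})(y\overline{h}) = x\bigl(\overline{g}(y\overline{h})\bigr)$, then insert $\overline{g}y = \sigma(g)(y)\overline{g}$ (valid since $\sigma(g)(y)=\overline{g}y\overline{g}_R^{-1}$ and $\overline{g}_R^{-1}\overline{g}$ reassociates to $1$ up to a $\phi$ factor), giving $x\sigma(g)(y)(\overline{g}\,\overline{h})$, and finally $\overline{g}\,\overline{h} = \bigl((\overline{g}\,\overline{h})(\overline{gh})_R^{-1}\bigr)\overline{gh} = \alpha(g,h)\overline{gh}$ after reassociating, where the $\phi$-corrections are absorbed exactly as in the two displayed forms of \eqref{alpha_condition}. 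Once (iv) holds, (ii) follows: condition \eqref{cond_quasicrossed_system3} is clear from $\overline{e}=1$; condition \eqref{cond_quasicrossed_system1} comes from applying $\sigma(g)\circ\sigma(h)$ versus $\sigma(gh)$ to $x\in A_e$ and comparing via the relation $\overline{g}\,\overline{h}=\alpha(g,h)\overline{gh}$ (this is essentially Lemma~\ref{sigma_product} combined with Lemma~\ref{lem:gradedunit2}(i)); condition \eqref{cond_quasicrossed_system2} is the associator identity: compute $\bigl((\overline{g}\,\overline{h})\overline{k}\bigr)$ in two ways using both \eqref{quassiasociative_condition} and repeated substitution of \eqref{item4}, the factor $\phi(g,h,k)$ appearing precisely from the quasiassociative law applied to $\overline{g},\overline{h},\overline{k}$.

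For the converse, given $(G,B,\phi,\sigma,\alpha)$, I define $C=\bigoplus_{g\in G}B\overline{g}$ with product \eqref{item4} extended bilinearly, set $C_g=B\overline{g}$, and must check: $C_gC_h\subseteq C_{gh}$ (immediate from the formula), that $1_C:=1_B\overline{e}$ is a two-sided identity (uses \eqref{cond_quasicrossed_system3} and $\sigma(e)=\mathrm{id}$, which follows from \eqref{cond_quasicrossed_system1} with $h=e$ and \eqref{cond_quasicrossed_system3}), and the quasiassociative law \eqref{quassiasociative_condition} on homogeneous generators $x\overline{g},y\overline{h},z\overline{k}$ --- expanding both $((x\overline{g})(y\overline{h}))(z\overline{k})$ and $(x\overline{g})((y\overline{h})(z\overline{k}))$ via \eqref{item4} reduces the required identity, after using that $\sigma(g)$ is an automorphism and \eqref{cond_quasicrossed_system1}, exactly to \eqref{cond_quasicrossed_system2}, producing the scalar $\phi(g,h,k)$. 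Finally each $\overline{g}$ is a unit in $C_g$ (its left/right inverses being built from $\alpha(g,g^{-1})^{-1}$, $\sigma$ and the relevant $\phi$-values, in analogy with Example~\ref{exa K_FG}), so $C$ is a quasicrossed product, and unwinding the definitions of $\sigma,\alpha$ from $C$ via \eqref{eq:sigma_condition} and \eqref{alpha_condition} recovers the original data.

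The main obstacle is bookkeeping of the $\phi$-factors: every reassociation in the computations of (iv), \eqref{cond_quasicrossed_system2}, and the converse's quasiassociativity check introduces a cocycle value, and one must verify these combine correctly --- this is where Lemma~\ref{lem:propcocycle} (especially (i)--(iii)) and the cocycle condition \eqref{eq:cocyclecond} are used repeatedly, and where the two equivalent forms of $\alpha(g,h)$ in \eqref{alpha_condition} must be reconciled via Lemma~\ref{lem:gradedunit}(i). None of the individual steps is deep, but ensuring the $\phi$'s telescope to precisely $\phi(g,h,k)$ (and to $1$ wherever an identity is claimed) is the delicate part.
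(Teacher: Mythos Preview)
Your proposal is correct and follows essentially the same approach as the paper: both establish (i) from the existence of graded units via Lemma~\ref{lem:strongly_iif} and Lemma~\ref{lem:gradedunit2}(ii), derive (iii) immediately from (i), prove the multiplication formula (iv) by inserting $\overline{g}_L^{-1}\overline{g}$ and reassociating (the key step being $\overline{g}\,\overline{h}=\alpha(g,h)\overline{gh}$), verify the quasicrossed-system axioms by comparing the two expansions of $(\overline{g}\,\overline{h})\overline{k}$, and check the converse by expanding both triple products via \eqref{item4} and invoking \eqref{cond_quasicrossed_system1}--\eqref{cond_quasicrossed_system2}. The only difference is ordering: you prove (iv) first and then deduce (ii), whereas the paper proves (ii) directly (establishing $\overline{g}\,\overline{h}=\alpha(g,h)\overline{gh}$ along the way) and then (iv); your order is arguably cleaner since that identity is a special case of (iv), but the substance is identical.
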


\begin{rmk}
We note that Proposition \ref{Prop_page_176} generalizes the results on quasiassociative division algebras presented by H. Albuquerque and A. Santana (see  Theorem 1.1 in \cite{AS2004} and Theorem 3.2 in \cite{AS2006}). The quasiassociative division algebras are precisely the quasicrossed products over the division associative algebras. Moreover, the three identities defining the multiplication in quasiassociative division algebras are now condensed in equation \eqref{item4}.
\end{rmk}

\begin{proof}
(i) Let $g \in G$ and take $u \in U(A) \cap A_g$. By
Lemma \ref{lem:gradedunit}(i), $$u_L^{-1}, u_R^{-1} \in
A_{g^{-1}}$$ and therefore $1 = u_L^{-1}u \in A_{g^{-1}}A_g $ and
$1 = uu_R^{-1} \in A_gA_{g^{-1}}$. By Lemma
\ref{lem:strongly_iif}, we conclude that $A$ is a strongly graded
quasialgebra. Applying Lemma \ref{lem:gradedunit2}(iii), $A_g =
A_e\overline{g}$ and the argument of this lemma applied to left
multiplication shows that $A_g = \overline{g}A_e$, proving this
item.

(ii) First we prove condition (\ref{cond_quasicrossed_system1}). Let $g, h \in G$ and $x \in A_e$. We have
\begin{equation*}
\begin{split}
\sigma(g)(\sigma(h)(x)) &=
\sigma(g)\Bigl(\overline{h}x\overline{h}_R^{-1}\Bigr) =
\Bigl(\overline{g}(\overline{h}(x\overline{h}_R^{-1}))\Bigr)\overline{g}_R^{-1}
=
\frac{1}{\phi(g,h,h^{-1})}\Bigl((\overline{g}\hspace{0.1cm}\overline{h})(x\overline{h}_R^{-1})\Bigr)\overline{g}_R^{-1}\\
& =
\frac{\phi(gh,h^{-1},g^{-1})}{\phi(g,h,h^{-1})}(\overline{g}\hspace{0.1cm}\overline{h})\Bigl((x\overline{h}_R^{-1})
\overline{g}_R^{-1}\Bigr) =
\frac{\phi(gh,h^{-1},g^{-1})}{\phi(g,h,h^{-1})}(\overline{g}\hspace{0.1cm}\overline{h})x(\overline{h}_R^{-1}\overline{g}_R^{-1})
\end{split}
\end{equation*}
\noindent and we get
\begin{equation}\label{1}
\begin{split}
\sigma(g)(\sigma(h)(x)) =
\frac{\phi(gh,h^{-1},g^{-1})}{\phi(g,h,h^{-1})}(\overline{g}\hspace{0.1cm}\overline{h})x(\overline{h}_R^{-1}\overline{g}_R^{-1}).
\end{split}
\end{equation}
\noindent Now, using Lemma \ref{lem:gradedunit}  we observe
that
\begin{equation*}
\begin{split}
(\overline{gh} )_R^{-1}(\alpha(g,h))_R^{-1}
&=
(\overline{gh} )_R^{-1}\Bigl(\phi((gh)^{-1},gh,(gh)^{-1})(\overline{g}\hspace{0.1cm}\overline{h})
(\overline{gh})_L^{-1}\Bigl)_R^{-1}\\
&\hspace*{-1cm}=
\frac{\phi((gh)^{-1},gh,(gh)^{-1})}{\phi((gh)^{-1},gh,(gh)^{-1})\phi(gh,(gh)^{-1},gh(gh)^{-1})}
(\overline{gh})_R^{-1}\Bigl(((\overline{gh} )_L^{-1})_R^{-1}(\overline{g}\hspace{0.1cm}\overline{h})_R^{-1}\Bigr)
\\
&\hspace*{-1cm}=
(\overline{gh})_R^{-1}\Bigl(\overline{gh}(\overline{g}\hspace{0.1cm}\overline{h})_R^{-1}\Bigr)=
\frac{\phi((gh)^{-1},gh,(gh)^{-1}) }{\phi((gh)^{-1},gh,(gh)^{-1})
}
\Bigl((\overline{gh})_L^{-1}\overline{gh}\Bigr)(\overline{g}\hspace{0.1cm}\overline{h})_R^{-1}\\
&\hspace*{-1cm}=\Bigl((\overline{gh})_L^{-1}\overline{gh}\Bigr)(\overline{g}\hspace{0.1cm}\overline{h})_R^{-1}=
\frac{\phi(h,h^{-1},g^{-1})}{\phi(g,h,h^{-1}g^{-1})}\overline{h}_R^{-1}\overline{g}_R^{-1}
\end{split}
\end{equation*}
\noindent and hence
\begin{equation}\label{one}
\begin{split}
\overline{h}_R^{-1}\overline{g}_R^{-1} =
\frac{\phi(g,h,h^{-1}g^{-1})}{\phi(h,h^{-1},g^{-1})}(\overline{gh} )_R^{-1}(\alpha(g,h))_R^{-1}.
\end{split}
\end{equation}
\noindent Applying Lemma \ref{lem:propcocycle}(iii) we obtain
\begin{equation*}
\begin{split}
\alpha(g,h)\overline{gh} &=
\Bigl(\phi((gh)^{-1},gh,(gh)^{-1})(\overline{g}\hspace{0.1cm}\overline{h})(\overline{gh})_L^{-1}\Bigl)\overline{gh}\\
&
=\phi((gh)^{-1},gh,(gh)^{-1})\phi(gh,(gh)^{-1},gh)(\overline{g}\hspace{0.1cm}\overline{h})
\Bigl((\overline{gh})_L^{-1}\overline{gh}\Bigl) =
\overline{g}\hspace{0.1cm}\overline{h}
\end{split}
\end{equation*}
\noindent then
\begin{equation}\label{two}
\begin{split}
\alpha(g,h)\overline{gh} = \overline{g}
\hspace{0.1cm}\overline{h}.
\end{split}
\end{equation}
\noindent Returning to \eqref{1}, using \eqref{one} and
\eqref{two} we have
\begin{equation*}
\begin{split}
\sigma(g)(\sigma(h)(x)) =
\frac{\phi(gh,h^{-1},g^{-1})\phi(g,h,h^{-1}g^{-1})}{\phi(g,h,h^{-1})\phi(h,h^{-1},g^{-1})}
\alpha(g,h)\overline{gh}x(\overline{gh})_R^{-1}(\alpha(g,h))_R^{-1}
\end{split}
\end{equation*}
\noindent and by Lemma \ref{lem:propcocycle}(iv) we conclude
\begin{equation*}
\begin{split}
\sigma(g)(\sigma(h)(x)) =
\alpha(g,h)\sigma(gh)(x)(\alpha(g,h))_R^{-1}
\end{split}
\end{equation*}
proving (\ref{cond_quasicrossed_system1}). Let us now care about
(\ref{cond_quasicrossed_system2}). For any $g,h,k \in G$, by Lemma
\ref{sigma_product} and using condition \eqref{two} we have
\begin{equation*}
\begin{split}
(\overline{g} \hspace{0.1cm} \overline{h})\overline{k} & =
(\alpha(g,h)\overline{gh})\overline{k} = \alpha(g,h)(\overline{gh}
\hspace{0.1cm}\overline{k}) =
\alpha(g,h)\alpha(gh,k)\overline{gh}\overline{k}
\end{split}
\end{equation*}
\noindent On the other hand, we obtain
\begin{equation}\label{eq:ttt}
\begin{split}
\overline{g}(\overline{h} \hspace{0.1cm} \overline{k}) =
\overline{g}(\alpha(h,k)\overline{hk})=(\overline{g}\alpha(h,k))\overline{hk}
\end{split}
\end{equation}
\noindent using Lemma \ref{lem:propcocycle}(iii) we have
\begin{equation*}
\begin{split}
\sigma(g)(\alpha(h,k))\overline{g}&= \Bigr(\overline{g}
\alpha(h,k){\overline{g}_R}^{-1}\Bigr)\overline{g} =
\phi(g^{-1},g,g^{-1}) \Bigr(\overline{g} \alpha(h,k)
\overline{g_L}^{-1}\Bigr)\overline{g}\\
&= \phi(g^{-1},g,g^{-1})\phi(g,g^{-1},g) \Bigl(\overline{g}
\alpha(h,k)\Bigr)\Bigl({\overline{g}_L}^{-1}\overline{g}\Bigr) =
\overline{g}\alpha(h,k)
\end{split}
\end{equation*}
Returning to \eqref{eq:ttt}
\begin{equation*}
\begin{split}
\overline{g}(\overline{h} \hspace{0.1cm} \overline{k}) &=
(\overline{g}\alpha(h,k))\overline{hk} =
\Bigl(\sigma(g)(\alpha(h,k))\overline{g}\Bigr)\overline{hk}=
\sigma(g)(\alpha(h,k))(\overline{g} \hspace{0.1cm}\overline{hk}) \\
&= \sigma(g)(\alpha(h,k))\alpha(g,hk)\overline{ghk}.
\end{split}
\end{equation*}
\noindent Since $G$ is associative and
$(\overline{g}\hspace{0.1cm}\overline{h})\overline{k} =
\phi(g,h,k) \overline{g}(\overline{h}\hspace{0.1cm}\overline{k})$,
we conclude that
\begin{equation*}
\begin{split}
\alpha(g,h)\alpha(gh,k) = \phi(g,h,k) \sigma(g)
(\alpha(h,k))\alpha(g,hk)
\end{split}
\end{equation*}
proving (\ref{cond_quasicrossed_system2}). Because $\overline{e} = 1$
we have
\begin{equation*}
\begin{split}
\alpha(g,e)& =
\phi((ge)^{-1},ge,(ge)^{-1})(\overline{g}\hspace{0.1cm}\overline{e})(\overline{ge})_L^{-1}
= \phi(g^{-1},g,g^{-1})\overline{g}\hspace{0.1cm}\overline{g}_L^{-1} \\
&= \frac{\phi(g^{-1},g,g^{-1})}{\phi(g^{-1},g,g^{-1})}
\overline{g}\hspace{0.1cm}\overline{g}_R^{-1} = 1
\end{split}
\end{equation*}
thus (\ref{cond_quasicrossed_system3}) is also true, proving (ii).

(iii) It is a direct consequence of (i).

(iv) Let $g, h \in G$ and $x,y \in A_e$. Using Lemma
\ref{lem:propcocycle}(ii) we obtain
\begin{equation*}
\begin{split}
(x\overline{g})(y\overline{h}) &=
(x\overline{g})\Bigl(\Bigl(y(\overline{g}_L^{-1}\overline{g})\Bigr)\overline{h}\Bigr)
=
(x\overline{g})\Bigl(\Bigl((y\overline{g}_L^{-1})\overline{g}\Bigr)\overline{h}\Bigr)
\\
&= \phi(g^{-1},g,h) (x\overline{g})
\Bigl((y\overline{g}_L^{-1})(\overline{g}\hspace{0.1cm}\overline{h})\Bigr)
= \frac{\phi(g^{-1},g,h)}{\phi(g,g^{-1},gh)}
\Bigl((x\overline{g})(y\overline{g}_L^{-1})\Bigr)(\overline{g}\hspace{0.1cm}\overline{h})
\\
&= \frac{\phi(g^{-1},g,h)}{\phi(g,g^{-1},gh)\phi(g^{-1},g,g^{-1})}
\Bigl((x\overline{g})(y\overline{g}_R^{-1})\Bigr)(\overline{g}\hspace{0.1cm}\overline{h})\\
&= x\sigma(g)(y)(\overline{g}\hspace{0.1cm}\overline{h}) =
x\sigma(g)(y)\alpha(g,h)\overline{gh}
\end{split}
\end{equation*}
\noindent proving \eqref{item4}. To prove the converse, we need
only to verify that the multiplication given by \eqref{item4} is
quasiassociative. To this end, let $g,h,k \in G$ and $x,y,z \in
B$. As $\sigma(g) \in Aut(B)$ we have
\begin{equation*}
\begin{split}
(x\overline{g})\Bigl((y\overline{h})(z\overline{k})\Bigr) &=
(x\overline{g})\Bigl(y\sigma(h)(z)\alpha(h,k)\overline{hk}\Bigr)\\
&=
x\sigma(g)\Bigl(y\sigma(h)(z)\alpha(h,k)\Bigl)\alpha(g,hk)\overline{ghk}
\\
&=
x\sigma(g)(y)\sigma(g)(\sigma(h)(z))\sigma(g)(\alpha(h,k))\alpha(g,hk)\overline{ghk}
\\
&\stackrel{\eqref{cond_quasicrossed_system1}}{=}
x\sigma(g)(y)\alpha(g,h)\sigma(gh)(z)\alpha(g,h)^{-1}\sigma(g)(\alpha(h,k))\alpha(g,hk)\overline{ghk}\\
&\stackrel{\eqref{cond_quasicrossed_system2}}{=} \frac{ 1}{
\phi(g,h,k)}
x\sigma(g)(y)\alpha(g,h)\sigma(gh)(z)\alpha(g,h)^{-1}\alpha(g,h)\alpha(gh,k)\overline{ghk}
\\
&= \frac{ 1}{ \phi(g,h,k)}
x\sigma(g)(y)\alpha(g,h)\sigma(gh)(z)\alpha(gh,k)\overline{ghk}
.
\end{split}
\end{equation*}
\noindent On the other hand,
\begin{equation*}
\begin{split}
\Bigl((x\overline{g})(y\overline{h})\Bigr)(z\overline{k}) =
\Bigl(x\sigma(g)(y)\alpha(g,h)\overline{gh}\Bigr)(z\overline{k}) =
x\sigma(g)(y)\alpha(g,h)\sigma(gh)(z)\alpha(gh,k)\overline{ghk}
\end{split}
\end{equation*}
\noindent therefore,
\begin{equation*}
\begin{split}
\Bigl((x\overline{g})(y\overline{h})\Bigr)(z\overline{k})=\phi(g,h,k)
(x\overline{g})\Bigl((y\overline{h})(z\overline{k})\Bigr)
\end{split}
\end{equation*}
completing the proof.
\end{proof}

\section{Equivalence of quasicrossed products and quasicrossed systems}
In this section we establish a relation of equivalence between quasicrossed systems, a relation between quasicrossed products and, in the end, we relate the two notions.
\begin{defi}
We say that two quasicrossed systems $(G,B, \phi,\sigma,\alpha)$ and
$(G,B,\phi,\sigma',\alpha')$ over an associative algebra $B$ for a fixed  cocycle $\phi : G \times G \times G \longrightarrow
\mathbb{K}^{\times}$ are
\textit{equivalent} if there exists a map $u : G \rightarrow U(B)$
with $u(e) = 1$ such that
\begin{eqnarray}\label{relation_sigma_sigma'}
&&\sigma'(g) = i_{u(g)} \circ \sigma(g) \\
\label{relation_alpha_alpha'} &&\alpha'(g,h) =
u(g)\sigma(g)(u(h))\alpha(g,h)u(gh)^{-1},
\end{eqnarray}
\noindent for any  $g,h \in G$, where $i_{y}(x)=yxy^{-1}$ for $x \in B$ and $y \in U(B)$.
\end{defi}

We define an equivalence relation in the class of quasicrossed systems over an associative algebra $B$ for a fixed  cocycle $\phi : G \times G \times G \longrightarrow
\mathbb{K}^{\times}$.
Assume that a graded quasialgebra $A$ is a quasicrossed product of $G$
over $A_e$. Due to Proposition \ref{Prop_page_176}, any choice of
a unit $\overline{g}$ of $A$ in $A_g,$ for any $ g \in G$, with
$\overline{e} = 1$, determines a corresponding quasicrossed system
$(G,A_e,\phi,\sigma,\alpha)$ for $G$ over $A_e$, with $\sigma$ and
$\alpha$ given by
\begin{eqnarray*}
&&\sigma(g)(x) = \overline{g}x\overline{g}_R^{-1}
\label{determines_quasicrossed_system} \\
&&\alpha(g,h) = (\overline{g} \hspace{0.1cm}
\overline{h})(\overline{gh})_R^{-1},
\label{determines_quasicrossed_system2}
\end{eqnarray*}
\noindent for any $x \in A_e$ and $g,h \in G$. Now, let
$\{\widetilde{g} : g \in G\}$ be another such set of units and
$(G,A_e,\phi,\sigma',\alpha')$ be the corres\-ponding quasicrossed
system. Because $\widetilde{g} \in A_g$, we infer from Proposition
\ref{Prop_page_176}(i) that there is a function $u : G \rightarrow
U(A_e)$ with $u(e) = 1$ such that $$\widetilde{g} =
u(g)\overline{g} \hspace{0.6cm} \mbox{for all} \hspace{0.2cm} g
\in G.$$ We note that $u(g)$ is indeed an unit of $ A_e $ with
inverse $ u(g)^{-1}=\overline{g}\widetilde{g}_R^{-1}$.
\begin{lem} In the previous conditions
we have that the quasicrossed systems $(G,A_e,\phi,\sigma,\alpha)$ and
$(G,A_e,\phi,\sigma',\alpha')$ are equivalent over the associative algebra $A_e$.
\end{lem}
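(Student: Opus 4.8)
The plan is to verify the two equivalence conditions \eqref{relation_sigma_sigma'} and \eqref{relation_alpha_alpha'} directly from the definitions, using the relation $\widetilde{g} = u(g)\overline{g}$ together with the description of the multiplication in $A$ given by Equation \eqref{item4} of Proposition \ref{Prop_page_176}. Since $A$ is a quasicrossed product, Proposition \ref{Prop_page_176}(i) guarantees $A_g = A_e\overline{g} = A_e\widetilde{g}$, so the function $u$ with $\widetilde{g} = u(g)\overline{g}$ exists and takes values in $U(A_e)$, with $u(e)=1$ because $\widetilde{e}=\overline{e}=1$; this is already recorded in the text preceding the lemma. So the real content is the two computations.

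First I would treat \eqref{relation_sigma_sigma'}. By definition $\sigma'(g)(x) = \widetilde{g}\,x\,\widetilde{g}_R^{-1}$ for $x \in A_e$. Writing $\widetilde{g} = u(g)\overline{g}$ and using Lemma \ref{lem:gradedunit}(iii) to express $\widetilde{g}_R^{-1} = (u(g)\overline{g})_R^{-1}$ in terms of $\overline{g}_R^{-1}$ and $u(g)^{-1}$ (note $u(g) \in A_e$, so the relevant cocycle factors involve only $e$ and are trivial by Lemma \ref{lem:propcocycle}(i)), one gets $\widetilde{g}_R^{-1} = \overline{g}_R^{-1}u(g)^{-1}$. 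Then $\widetilde{g}\,x\,\widetilde{g}_R^{-1} = (u(g)\overline{g})\,x\,(\overline{g}_R^{-1}u(g)^{-1})$, and a careful reassociation — again all the $\phi$-factors collapse because $u(g), x, u(g)^{-1} \in A_e$ and because of quasiassociativity applied with the middle element $\overline{g}$ of degree $g$ and the outer ones of degree $e$ — yields $u(g)\bigl(\overline{g}\,x\,\overline{g}_R^{-1}\bigr)u(g)^{-1} = u(g)\,\sigma(g)(x)\,u(g)^{-1} = i_{u(g)}(\sigma(g)(x))$, which is exactly \eqref{relation_sigma_sigma'}.

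Next I would handle \eqref{relation_alpha_alpha'}. By definition $\alpha'(g,h) = (\widetilde{g}\,\widetilde{h})(\widetilde{gh})_R^{-1}$. The cleanest route is to use \eqref{item4}: from $\widetilde{g} = u(g)\overline{g}$ and $\widetilde{h} = u(h)\overline{h}$ we compute $\widetilde{g}\,\widetilde{h} = (u(g)\overline{g})(u(h)\overline{h}) = u(g)\sigma(g)(u(h))\alpha(g,h)\overline{gh}$ directly by \eqref{item4}. On the other hand $(\widetilde{gh})_R^{-1} = (u(gh)\overline{gh})_R^{-1} = \overline{gh}_R^{-1}u(gh)^{-1}$ as before. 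Multiplying these, and using that $\alpha'(g,h) \in A_e$ so that the product $(\widetilde{g}\,\widetilde{h})(\widetilde{gh})_R^{-1}$ reassociates against $\overline{gh}$ and $\overline{gh}_R^{-1}$ with trivial cocycle contributions (mirroring the computation of $\alpha(g,h)$ itself via \eqref{two} and \eqref{alpha_condition}), one obtains $\alpha'(g,h) = u(g)\sigma(g)(u(h))\alpha(g,h)u(gh)^{-1}$, which is \eqref{relation_alpha_alpha'}. Together with $u(e)=1$, this shows the two systems are equivalent over $A_e$.

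The main obstacle is purely bookkeeping: keeping track of which $\phi$-factors appear when reassociating products of homogeneous elements where some factors lie in $A_e$, and confirming they all equal $1$ via Lemma \ref{lem:propcocycle}(i)–(iii). I expect no conceptual difficulty — every ingredient (the description of $\widetilde{g}$, the inverse formula of Lemma \ref{lem:gradedunit}(iii), the multiplication rule \eqref{item4}, and the cocycle normalizations) is already available — so the proof is a matter of assembling these pieces carefully, exactly in the spirit of the computations already carried out in the proof of Proposition \ref{Prop_page_176}.
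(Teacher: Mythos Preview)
Your proposal is correct and follows essentially the same approach as the paper's proof. The only minor difference is that for \eqref{relation_alpha_alpha'} you invoke the multiplication formula \eqref{item4} directly to expand $(u(g)\overline{g})(u(h)\overline{h})$, whereas the paper re-derives this expansion by hand via the cocycle manipulations of Lemma~\ref{lem:propcocycle}(ii)--(iii); your shortcut is legitimate since \eqref{item4} is already established in Proposition~\ref{Prop_page_176}.
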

\begin{proof}
For $g \in G$ and $x \in A_e$ we have
\begin{equation*}
\begin{split}
\sigma'(g)(x) &=\widetilde{g}x\widetilde{g}_R^{-1}=
u(g)\overline{g} x (u(g)\overline{g})_R^{-1} = u(g)\overline{g} x
\overline{g}_R^{-1}u(g)^{-1} =
u(g)(\overline{g}x\overline{g}_R^{-1})u(g)^{-1} \\
&= u(g)\sigma(g)(x)u(g)^{-1} = i_{u(g)}(\sigma(g)(x))
\end{split}
\end{equation*}
\noindent proving \eqref{relation_sigma_sigma'}.
Now we take care of \eqref{relation_alpha_alpha'}. For $g, h \in
G$, using Lemma \ref{lem:propcocycle}(ii)-(iii) we have
\begin{equation*}
\begin{split}
u(gh)\overline{gh} &=  \widetilde{gh}  =
\alpha'(g,h)^{-1}\widetilde{g}\hspace{0.1cm} \widetilde{h}   =
\alpha'(g,h)^{-1}u(g)\overline{g}u(h)\overline{h} =
\alpha'(g,h)^{-1}(u(g)\overline{g})(u(h)(\overline{g}_L^{-1}\overline{g})\overline{h})\\
& = \frac{1}{\phi(g^{-1},g,g^{-1})}
\alpha'(g,h)^{-1}(u(g)\overline{g})(u(h)(\overline{g}_R^{-1}\overline{g})\overline{h})\\
& = \frac{\phi(g^{-1},g,h)}{\phi(g^{-1},g,g^{-1}) }
\alpha'(g,h)^{-1}(u(g) \overline{g})(u(h)\overline{g}_R^{-1}
(\overline{g}\hspace{0.1cm}\overline{h}))\\
& =
\frac{\phi(g^{-1},g,h)}{\phi(g^{-1},g,g^{-1})\phi(g,g^{-1},gh)}
\alpha'(g,h)^{-1}u(g)\Bigl(\overline{g}u(h)\overline{g}_R^{-1}\Bigr)(\overline{g}\hspace{0.1cm}\overline{h})\\
& =
\alpha'(g,h)^{-1}u(g)\sigma(g)(u(h))(\overline{g}\hspace{0.1cm}\overline{h})\\
& = \alpha'(g,h)^{-1}u(g)\sigma(g)(u(h))\alpha(g,h)\overline{gh}
\end{split}
\end{equation*}
\noindent therefore
\begin{equation*}
\begin{split}
\alpha'(g,h) = u(g)\sigma(g)(u(h))\alpha(g,h)u(gh)^{-1}
\end{split}
\end{equation*}
\noindent proving \eqref{relation_alpha_alpha'}.  Consequently,
$(G,A_e,\phi,\sigma,\alpha)$ and $(G,A_e,\phi,\sigma',\alpha')$
are equivalent as desired.
\end{proof}
\noindent Thus any given graded quasialgebra $A$ which is a
quasicrossed product of $G$ over $A_e$ determines a unique
equiva\-lence class of corresponding quasicrossed systems for $G$ over
$A_e$. We stress the independence of the choice of the sets of units used to define the quasicrossed systems.

\begin{defi}
Assume that $A, A'$ are two quasicrossed products of $G$ over $A_e$. We
say that $A$ and $A'$ are \textit{equivalent} if there is a graded
isomorphism of algebras $f : A \rightarrow A'$ which is also an
isomorphism of $A_e$-modules. The latter means that $f$ is a
isomorphism such that $f(A_g) = {A'}_g$ for all $g \in G$ and
$f(x) = x$ for any $x \in A_e$.
\end{defi}

\begin{teo}\label{teo_equivalent}
Two quasicrossed products of $G$ over $A_e$ are equivalent if and only
if they determine the same equivalence class of quasicrossed systems
for $G$ over $A_e$.
\end{teo}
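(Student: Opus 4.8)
The plan is to prove the two implications of Theorem~\ref{teo_equivalent} separately, using Proposition~\ref{Prop_page_176} as the bridge between quasicrossed products and quasicrossed systems.

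\textbf{($\Rightarrow$)} Suppose $f : A \rightarrow A'$ is an equivalence of quasicrossed products, i.e.\ a graded algebra isomorphism fixing $A_e$ pointwise. Fix for each $g \in G$ a unit $\overline{g} \in U(A) \cap A_g$ with $\overline{e} = 1$, and let $(G,A_e,\phi,\sigma,\alpha)$ be the corresponding quasicrossed system given by Equation~\eqref{eq:sigma_condition} and \eqref{alpha_condition}. Since $f$ is a graded algebra isomorphism fixing $A_e$, the elements $f(\overline{g})$ are units in ${A'}_g$ with $f(\overline{e}) = 1$; I would first check that $f(\overline{g})_R^{-1} = f(\overline{g}_R^{-1})$ (because $f$ is multiplicative and fixes $1 \in A_e$, and right inverses are unique by Lemma~\ref{lem:gradedunit}(ii)). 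Then choosing the units $\{f(\overline{g}) : g \in G\}$ to define the quasicrossed system of $A'$, one computes for $x \in A_e$ and $g,h \in G$
\begin{equation*}
\sigma'(g)(x) = f(\overline{g})\,x\,f(\overline{g})_R^{-1} = f\bigl(\overline{g}x\overline{g}_R^{-1}\bigr) = f(\sigma(g)(x)) = \sigma(g)(x),
\end{equation*}
using $f(x) = x$, and similarly $\alpha'(g,h) = f(\overline{g}\,\overline{h})f(\overline{gh})_R^{-1} = f(\alpha(g,h)) = \alpha(g,h)$. Hence $A'$ admits \emph{the very same} quasicrossed system among its corresponding ones; by the preceding Lemma they lie in the same equivalence class, so $A$ and $A'$ determine the same class.

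\textbf{($\Leftarrow$)} Conversely, suppose $A$ and $A'$ determine the same equivalence class of quasicrossed systems. By Proposition~\ref{Prop_page_176}, choosing units $\overline{g} \in A_g$ (resp.\ $\overline{g}' \in {A'}_g$) with $\overline{e} = 1$ (resp.\ $\overline{e}' = 1$) realizes $A$ (resp.\ $A'$) as the free $A_e$-module on $\{\overline{g}\}$ (resp.\ $\{\overline{g}'\}$) with multiplication given by Equation~\eqref{item4} for the corresponding quasicrossed system $(G,A_e,\phi,\sigma,\alpha)$ (resp.\ $(G,A_e,\phi,\sigma',\alpha')$). After possibly replacing the $\overline{g}'$ by $u(g)^{-1}\overline{g}'$ for a suitable $u : G \rightarrow U(A_e)$ with $u(e)=1$ — using that equivalence of quasicrossed systems is exactly effected by such a change of units, as established in the Lemma just before the Definition of equivalence of quasicrossed products — we may assume $\sigma' = \sigma$ and $\alpha' = \alpha$. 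Now define $f : A \rightarrow A'$ on the free basis by $f(x\overline{g}) = x\overline{g}'$ for $x \in A_e$, $g \in G$, extended additively. This is visibly a bijection of $A_e$-modules with $f(A_g) = {A'}_g$ and $f(x) = x$ for $x \in A_e$; multiplicativity follows since both products are given by the \emph{same} formula \eqref{item4} with the \emph{same} $\sigma$ and $\alpha$:
\begin{equation*}
f\bigl((x\overline{g})(y\overline{h})\bigr) = f\bigl(x\sigma(g)(y)\alpha(g,h)\overline{gh}\bigr) = x\sigma(g)(y)\alpha(g,h)\overline{gh}' = (x\overline{g}')(y\overline{h}') = f(x\overline{g})f(y\overline{h}).
\end{equation*}
Thus $f$ is an equivalence of quasicrossed products.

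The routine but slightly delicate points are: verifying $f(\overline{g}_R^{-1}) = f(\overline{g})_R^{-1}$ in the forward direction, and, in the backward direction, the bookkeeping of how the chosen equivalence-of-systems map $u$ lets one renormalize the units of $A'$ so that the two multiplication formulas literally coincide. The main conceptual obstacle is really just keeping straight that ``determining the same equivalence class'' lets us pick representatives with $\sigma = \sigma'$ and $\alpha = \alpha'$ \emph{simultaneously}, after which the isomorphism $f$ is forced and its verification is immediate from \eqref{item4}; no new cocycle identities beyond those in Lemma~\ref{lem:propcocycle} and the defining relations \eqref{cond_quasicrossed_system1}--\eqref{cond_quasicrossed_system3} are needed.
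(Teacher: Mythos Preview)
Your proof is correct and rests on the same Proposition~\ref{Prop_page_176} machinery as the paper, but the forward direction is organized a bit differently. The paper fixes \emph{independent} systems of units $\{\overline{g}\}$ in $A$ and $\{\widetilde{g}\}$ in $A'$, writes $f(\widetilde{g}) = u(g)\overline{g}$ for some $u : G \to U(A_e)$, and then verifies \eqref{relation_sigma_sigma'}--\eqref{relation_alpha_alpha'} directly; you instead transport the units of $A$ along $f$ and observe that the resulting quasicrossed system for $A'$ is literally \emph{identical} to that of $A$, which is slicker and bypasses any explicit computation with $u$. In the backward direction both arguments are essentially the same: your renormalization $\overline{g}' \mapsto u(g)^{-1}\overline{g}'$ is exactly the change of units that the paper encodes by defining $f(\widetilde{g}) = u(g)\overline{g}$, so after unwinding, the two constructions of $f$ coincide. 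Your flagged ``delicate points'' (that $f(\overline{g}_R^{-1}) = f(\overline{g})_R^{-1}$, and that $v = u^{-1}$ really does bring $(\sigma',\alpha')$ back to $(\sigma,\alpha)$) are genuine but routine, and no identities beyond Lemma~\ref{lem:propcocycle} are needed, as you note.
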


\begin{proof}
Consider $A$ and $A'$ two quasicrossed products of $G$ over $A_e$. Let
$(G,A_e, \phi,\sigma,\alpha)$ and $(G,A_e,\phi,\sigma',\alpha')$
be the representatives of the corresponding equivalence classes of
quasicrossed systems for $G$ over $A_e$ and take the systems of units
$\{\overline{g} : g \in G\}$ and $\{\widetilde{g} : g \in G\}$ in
$A$ and $A'$, respectively, which give rise to the above quasicrossed
systems.

First assume that $A'$ and $A$ are equivalent via $f : A'
\rightarrow A$. Because $f( \widetilde g ) \in A_g$ for all $g \in
G$, there is a map $u : G \rightarrow U(A_e)$ with $u(e) = 1$ such
that $f(\widetilde{g}) = u(g)\overline{g}$ for any $g \in G$. We
observe that for given $g \in G$,
\begin{equation*}
\begin{split}
1 = f(1) = f(\widetilde{g}\widetilde{g}_R^{-1}) =
f(\widetilde{g})f(\widetilde{g}_R^{-1}) =
u(g)\overline{g}f(\widetilde{g}_R^{-1}),
\end{split}
\end{equation*}
\noindent so $u(g)$ is an unit in $A_e$ with inverse
\begin{equation*}
\begin{split}
&u(g)^{-1} = \overline{g}f(\widetilde{g}_R^{-1}).
\end{split}
\end{equation*}
\noindent Now we recall the multiplication in $A$ and $A'$,
respectively, to be used in what follows:
\begin{eqnarray*}
&&(x\overline{g})(y\overline{h}) =
x\sigma(g)(y)\alpha(g,h)\overline{gh} \\
&&(x\widetilde{g})(y \widetilde{h}) = x\sigma'(g)(y)\alpha'(g,h)
\widetilde{gh}
\end{eqnarray*}
for any $x,y \in A_e $ and $g,h \in G$. To given  $x \in A_e$ and
$ g \in G$  we have $\widetilde{g}(x\widetilde{e})=
\sigma'(g)(x)\alpha'(g,e) \widetilde{ge}=\sigma'(g)(x)
\widetilde{g }$. Since $f$ is a morphism of algebras we have
\begin{equation*}
\begin{split}
f(\widetilde{g}(x\widetilde{e})) &= f(\widetilde{g})f(
x\widetilde{e} )=f(\widetilde{g})(xf(
\widetilde{e}))=(u(g)\overline{g})(xu(e)\overline{e})=(u(g)\overline{g})(x
\overline{e})\\
&=u(g)\sigma(g)(x)\alpha(g,e)\overline{ge}=u(g)\sigma(g)(x)
\overline{g }
\end{split}
\end{equation*}
and\begin{equation*}
\begin{split}
f(\sigma'(g)(x) \widetilde{g }) &=\sigma'(g)(x)f( \widetilde{g
})=\sigma'(g)(x)u(g)\overline{g}
\end{split}
\end{equation*}
therefore $\sigma'(g)(x)=u(g)\sigma(g)(x)u(g)^{-1}$ proving
\eqref{relation_sigma_sigma'}. Now for $g,h \in G$ we have $
\widetilde{g} \widetilde{h}  =\sigma'(g)(1)\alpha'(g,h)
\widetilde{gh}= \alpha'(g,h) \widetilde{gh}$. Again, since $f$ is
a morphism of algebras,
\begin{equation*}
\begin{split}
f(\widetilde{g} \widetilde{h})&= f(\widetilde{g})f( \widetilde{h})
=(u(g)\overline{g})(u(h)\overline{h})=u(g)\sigma(g)(u(h))\alpha(g,h)\overline{gh}
\end{split}
\end{equation*}
and
\begin{equation*}
\begin{split}
f(\alpha'(g,h) \widetilde{gh})&= \alpha'(g,h) f( \widetilde{gh})=
\alpha'(g,h) u(gh)\overline{gh}
\end{split}
\end{equation*}
\noindent therefore $\alpha'(g,h)=u(g)\sigma(g)(u(h))\alpha(g,h)
u(gh)^{-1}$ getting \eqref{relation_alpha_alpha'}. Thus
$(G,A_e,\sigma,\alpha)$ and $(G,A_e,\sigma',\alpha')$ are
equivalent.

Conversely, suppose that there is a map $u : G \rightarrow U(A_e)$
with $u(e) = 1$ such that \eqref{relation_sigma_sigma'}
and \eqref{relation_alpha_alpha'} are verified. Using again the
multiplication in $A$ and in $A'$, it is easily verified that the
$A_e$-linear extension of the map $f(\widetilde{g}) =
u(g)\overline{g} $ for any $ g \in G$, also denoted by $f$,
provides an equivalence of $A'$ and $A$. In fact, $f$ is an
algebra morphism, because for $ g,h \in G$
\begin{equation*}
\begin{split}
f( \widetilde{g} \widetilde{h} ) = f(\alpha'(g,h)\widetilde{gh}) =
\alpha'(g,h)f(\widetilde{gh}) = \alpha'(g,h)u(gh) \overline{gh}
\end{split}
\end{equation*}
\noindent and
\begin{equation*}
\begin{split}
f(\widetilde{g})f(\widetilde{h})
=(u(g)\overline{g})(u(h)\overline{h})=u(g)\sigma(g)(u(h))\alpha(g,h)\overline{gh}
\end{split}
\end{equation*}
\noindent are equal by \eqref{relation_alpha_alpha'}. Also
satisfies $f(A_g') = A_g$. Indeed, for $x_g \in A_g$, by
Proposition \ref{Prop_page_176}(i) we may write $x_g =
x\overline{g} $ for a certain $ x \in A_e$. Then
\begin{equation*}
\begin{split}
f(xu(g)^{-1}\widetilde{g}) = xu(g)^{-1}f(\widetilde{g}) =
xu(g)^{-1}u(g)\overline{g}=x \overline{g}=x_g,
\end{split}
\end{equation*}
\noindent with $ xu(g)^{-1}\widetilde{g} \in A_g'$. Finally, for
any $x \in A_e$ we have $f(x) =f(x\widetilde{e})=
xf(\widetilde{e}) = x$, completing the proof.
\end{proof}

\begin{defi}
Take an automorphism system $\sigma : G \rightarrow
Aut(\mathbb{K})$, where we consider the field
$\mathbb{K}$ as the associative algebra $B$ on the natural way. A quasicrossed mapping $\delta : G \times G
\rightarrow \mathbb{K}^{\times}$ (see Definition \ref{def_quasicrossed_mapping}) is called a \emph{coboundary} if
there is a function $u : G \rightarrow \mathbb{K}^{\times}$ such
that
\begin{equation*}\label{coboundary_identity}
\begin{split}
\delta(g,h) = u(g)\sigma(g)(u(h))u(gh)^{-1},
\end{split}
\end{equation*}
for any $g,h \in G$.
\end{defi}

\begin{pro}  The quasicrossed systems $ (G,\mathbb{K},\phi,\sigma,\alpha)$ and $ (G,\mathbb{K},\phi,\sigma,\alpha')$ over the associative algebra $\mathbb{K}$ for a fixed cocycle $\phi : G \times G \times G \longrightarrow \mathbb{K}^{\times}$ and an automorphism
system $\sigma : G \rightarrow Aut(B)$  are equivalent if and only
if $\alpha' = \delta \alpha$ for a certain coboundary  $\delta$.
\end{pro}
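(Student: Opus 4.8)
The plan is to exploit that the ground algebra here is the field $\mathbb{K}$, whose group of units $\mathbb{K}^{\times}$ is abelian and lies in the centre of $\mathbb{K}$. The first observation is therefore that every inner automorphism $i_{y}$ with $y \in \mathbb{K}^{\times}$ is the identity map on $\mathbb{K}$, so that condition \eqref{relation_sigma_sigma'} in the definition of equivalence becomes $\sigma' = \sigma$, which is already built into our hypotheses. Hence the systems $(G,\mathbb{K},\phi,\sigma,\alpha)$ and $(G,\mathbb{K},\phi,\sigma,\alpha')$ are equivalent if and only if there exists a map $u : G \rightarrow \mathbb{K}^{\times}$ with $u(e) = 1$ satisfying \eqref{relation_alpha_alpha'}.

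For the direct implication I would assume such a $u$ exists and, using that $\mathbb{K}^{\times}$ is commutative, rewrite \eqref{relation_alpha_alpha'} as
\[
\alpha'(g,h) = \bigl( u(g)\,\sigma(g)(u(h))\,u(gh)^{-1} \bigr)\,\alpha(g,h), \qquad g,h \in G.
\]
Then $\delta(g,h) := u(g)\,\sigma(g)(u(h))\,u(gh)^{-1}$ defines a quasicrossed mapping $\delta : G \times G \rightarrow \mathbb{K}^{\times}$ (in the sense of Definition \ref{def_quasicrossed_mapping}) which, by its very form, is a coboundary with witness $u$, and one reads off $\alpha' = \delta\,\alpha$.

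For the converse I would start from $\alpha' = \delta\,\alpha$ with $\delta(g,h) = u(g)\,\sigma(g)(u(h))\,u(gh)^{-1}$ for some $u : G \rightarrow \mathbb{K}^{\times}$, and first secure the normalization $u(e) = 1$: since both $\alpha$ and $\alpha'$ belong to quasicrossed systems, \eqref{cond_quasicrossed_system3} gives $\alpha(e,g) = \alpha'(e,g) = 1$, hence $\delta(e,g) = 1$; taking $g = h = e$ in \eqref{cond_quasicrossed_system1} together with $\alpha(e,e) = 1$ forces $\sigma(e)$ to be idempotent and thus equal to $\mathrm{id}$, so that $\delta(e,g) = u(e)$, whence $u(e) = 1$. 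With this in hand, \eqref{relation_sigma_sigma'} holds trivially and, again by commutativity of $\mathbb{K}^{\times}$, condition \eqref{relation_alpha_alpha'} is nothing but $\alpha'(g,h) = \delta(g,h)\,\alpha(g,h)$, the assumed identity; hence the two systems are equivalent. The only step that requires any care is this normalization argument in the converse direction, which rests on $\sigma(e) = \mathrm{id}$ and on the normalizations of $\alpha$ and $\alpha'$; the remainder is routine bookkeeping, the conceptual point being that over a commutative base the constraint on $\sigma$ disappears and the constraint on $\alpha$ degenerates precisely to the coboundary relation $\alpha' = \delta\alpha$.
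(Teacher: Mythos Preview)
Your proof is correct and follows the same approach as the paper: specialize the definition of equivalence to $B=\mathbb{K}$, observe that commutativity of $\mathbb{K}^{\times}$ trivializes condition \eqref{relation_sigma_sigma'}, and rewrite \eqref{relation_alpha_alpha'} as $\alpha'=\delta\alpha$ with $\delta$ a coboundary. Your treatment is in fact more careful than the paper's, which simply invokes Theorem \ref{teo_equivalent} and commutativity without addressing the normalization $u(e)=1$; your derivation of $\sigma(e)=\mathrm{id}$ from \eqref{cond_quasicrossed_system1} and \eqref{cond_quasicrossed_system3}, and hence $u(e)=\delta(e,g)=1$, fills a small gap the paper leaves implicit.
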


\begin{proof}
Apply Theorem \ref{teo_equivalent} considering the field
$\mathbb{K}$ as the associative algebra $B$ on the natural way. On
this context, condition \eqref{relation_sigma_sigma'} is trivial
and also from $\mathbb{K}$ is commutative we can rewrite
\eqref{relation_alpha_alpha'} as $\alpha = \delta \alpha'$ where
$\delta$ is a coboundary quasicrossed mapping.
\end{proof}

\section{Cayley (Clifford) quasicrossed products}

Let $A$ be a finite-dimensional (not necessarily associative) algebra with identity element $1$ and an  \textit{involution} $ \varsigma :A\longrightarrow A$, meaning that $ \varsigma $ is an antiautomorphism ($\varsigma(ab)=\varsigma(b)\varsigma(a)$ for all $a,b \in A$) and $\varsigma ^2=id$. Moreover, the involution $ \varsigma$ is called \textit{strong} if it satisfies the property $a+\varsigma(a), a.\varsigma(a) \in \mathbb{K} 1$, for all $a \in A$.
 The Cayley-Dickson process says that we can obtain a new algebra $\overline{A}= A\oplus vA$ of twice the dimension (the elements are denoted by $a,va$, for $a \in A$) with multiplication defined by
\begin{equation*}
\begin{split}
(a+vb) (c+vd) = (ac + \epsilon d\varsigma(b))+  v(\varsigma(a)d+cb),
\end{split}
\end{equation*}
and with a new involution $\overline{\varsigma}$ given by
\begin{equation*}
\begin{split}
\overline{\varsigma}(a+vb) = \varsigma(a) - v b ,
\end{split}
\end{equation*}
for any $a,b,c,d \in A$. The symbol $v$ here is a notation device to label the second copy of $A$ in $\overline{A}$ and $vv=\epsilon 1$, for a fixed nonzero element $\epsilon$ in $ \mathbb{K} $.

\begin{pro}  If $A$ is a  quasicrossed product over the group $ G $ then the algebra $\overline{A}= A\oplus vA$ resulting from the Cayley-Dickson process is a quasicrossed product over the group $\overline{G}=G\times \mathbb{Z}_2$.
\end{pro}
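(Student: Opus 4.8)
The plan is to equip $\overline{A}=A\oplus vA$ with the $\overline{G}=G\times\mathbb{Z}_2$-grading $\overline{A}_{(g,\bar 0)}:=A_g$ and $\overline{A}_{(g,\bar 1)}:=vA_g$ for $g\in G$, to check that this makes $\overline{A}$ a $\overline{G}$-graded quasialgebra, and then to produce a homogeneous unit of $\overline{A}$ in each component. The point to settle first is that a \emph{strong} involution respects the grading of $A$: for homogeneous $a\in A_g$, the relations $a+\varsigma(a)\in\mathbb{K}1\subseteq A_e$ and $a\varsigma(a)\in\mathbb{K}1\subseteq A_e$ force, upon comparing homogeneous components, $\varsigma(A_g)=A_g$ for every $g$, with $\varsigma(1)=1$ and $\varsigma$ acting as $-\mathrm{id}$ on $A_g$ whenever $g\neq e$. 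Granting this, the Cayley--Dickson product $(a+vb)(c+vd)=(ac+\epsilon d\varsigma(b))+v(\varsigma(a)d+cb)$ immediately yields $\overline{A}_{(g,i)}\,\overline{A}_{(h,j)}\subseteq\overline{A}_{(gh,\,i+j)}$, so the decomposition is a $\overline{G}$-grading; that the quasiassociative law holds for a cocycle $\overline{\phi}$ inherited from $\phi$ via the doubling is the construction already carried out for the octonions in \cite{AM1999}, and $\overline{A}$ is unital with identity $1\in A_e=\overline{A}_{(e,\bar 0)}=\overline{A}_e$.

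The core of the argument is the quasicrossed-product condition for $\overline{A}$: for each $(g,i)\in\overline{G}$ one must find a unit of $\overline{A}$ lying in $\overline{A}_{(g,i)}$. Since $A$ is a quasicrossed product of $G$ over $A_e$, fix for every $g\in G$ a unit $\overline{g}\in U(A)\cap A_g$ with $\overline{e}=1$, with inverses $\overline{g}_L^{-1},\overline{g}_R^{-1}\in A_{g^{-1}}$ (Lemma \ref{lem:gradedunit}). The map $a\mapsto a+v0$ is a unital embedding of $A$ into $\overline{A}$, hence $\overline{g}$ remains a unit of $\overline{A}$; this handles the components $(g,\bar 0)$. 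For $(g,\bar 1)$ I would show that $v\overline{g}\in vA_g=\overline{A}_{(g,\bar 1)}$ is a unit of $\overline{A}$: using $(vb)(vd)=\epsilon\,d\varsigma(b)$ together with $\varsigma(\overline{g})=-\overline{g}$ (valid since $g\neq e$), one computes directly that
\[
(v\overline{g})\bigl(-\epsilon^{-1}\,v\,\overline{g}_L^{-1}\bigr)=1=\bigl(-\epsilon^{-1}\,v\,\overline{g}_R^{-1}\bigr)(v\overline{g}),
\]
so $v\overline{g}$ is invertible, its one-sided inverses lying in $vA_{g^{-1}}=\overline{A}_{(g^{-1},\bar 1)}=\overline{A}_{(g,\bar 1)^{-1}}$ as forced by Lemma \ref{lem:gradedunit}(i); the remaining component $(e,\bar 1)$ is the special case $v\cdot(\epsilon^{-1}v)=(\epsilon^{-1}v)\cdot v=1$, where $\varsigma(1)=1$. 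Therefore $U(\overline{A})\cap\overline{A}_{(g,i)}\neq\varnothing$ for all $(g,i)\in\overline{G}$, which is exactly the assertion that $\overline{A}$ is a quasicrossed product of $\overline{G}$ over $\overline{A}_e=A_e$.

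The main obstacle is the sign- and scalar-bookkeeping in the computation of the inverse of $v\overline{g}$: one has to keep track of the factor $\epsilon$ and, above all, of the sign by which $\varsigma$ acts on a homogeneous component of nontrivial degree, and one must remember that the left and right inverses of $v\overline{g}$ need not coincide, differing by the appropriate value of $\overline{\phi}$ in accordance with Lemma \ref{lem:gradedunit}(i). The only other delicate point is the well-definedness of the $\overline{G}$-grading, which is precisely where the hypothesis that the involution is \emph{strong} is used.
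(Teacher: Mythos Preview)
Your strategy is the same as the paper's: give $\overline{A}$ the $\overline{G}$-grading $\overline{A}_{(g,\bar 0)}=A_g$, $\overline{A}_{(g,\bar 1)}=vA_g$, use $\overline{g}$ as a unit in the first kind of component and $v\overline{g}$ in the second. Your argument is correct.

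The only substantive difference is in how the inverse of $v\overline{g}$ is produced. You first invest work in showing, via the \emph{strong} involution conditions, that $\varsigma$ acts as $-\mathrm{id}$ on $A_g$ for $g\neq e$, and then use $\varsigma(\overline{g})=-\overline{g}$ and $\varsigma(\overline{g}_R^{-1})=-\overline{g}_R^{-1}$ to check that $-\epsilon^{-1}v\,\overline{g}_L^{-1}$ and $-\epsilon^{-1}v\,\overline{g}_R^{-1}$ are one-sided inverses (treating $(e,\bar 1)$ separately). The paper instead writes down the single element $\epsilon^{-1}v\,\varsigma(\overline{g}_R^{-1})$ and verifies directly, using only $\varsigma^{2}=\mathrm{id}$ and $\varsigma(xy)=\varsigma(y)\varsigma(x)$, that
\[
\bigl(\epsilon^{-1}v\,\varsigma(\overline{g}_R^{-1})\bigr)(v\overline{g})
=\overline{g}\,\varsigma^{2}(\overline{g}_R^{-1})=1,
\qquad
(v\overline{g})\bigl(\epsilon^{-1}v\,\varsigma(\overline{g}_R^{-1})\bigr)
=\varsigma(\overline{g}_R^{-1})\varsigma(\overline{g})=\varsigma(\overline{g}\,\overline{g}_R^{-1})=1,
\]
so it is a two-sided inverse, uniformly in $g$ (no special case at $g=e$) and without invoking the ``strong'' hypothesis at all for this step. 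Your preliminary analysis of $\varsigma$ on homogeneous components is still what is needed to see that the $\overline{G}$-grading is well defined; the paper simply asserts that part as ``easy to see''.
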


\begin{proof}
First, we note that if  $A = \bigoplus_{g \in G}A_g$ is a $G$-graded algebra, it is easy to see that $\overline{A}= A\oplus vA$ is a $\overline{G}$-graded algebra. We may write the grading $\overline{A}= \bigoplus_{g \in G}A_{(g,0)} \oplus \bigoplus_{g \in G}A_{(g,1)}$, with $ A_{(g,0)}=A_g$ and $ A_{(g,1)}=vA_g$. Now assume that $A = \bigoplus_{g \in G}A_g$ is a  quasicrossed product. For any $g \in G$ exists an unit  $\overline{g}$ in $A_g$, so trivially we have an unit in $ A_{(g,0)} $. Moreover, $v\overline{g}$ is a unity in $ A_{(g,1)}$ with  $\displaystyle v \frac{\varsigma (\overline{g}_R^{-1})}{\epsilon} $ its left inverse and right inverse, as
\begin{equation*}
\begin{split}
& (v \frac{\varsigma (\overline{g}_R^{-1} )}{\epsilon})(v\overline{g})=\epsilon\overline{g}\varsigma (\frac{\varsigma (\overline{g}_R^{-1})}{\epsilon})=\epsilon\overline{g}\frac{1}{\epsilon}\varsigma^2 (\overline{g}_R^{-1}) =
 \overline{g} \varsigma^2 (\overline{g}_R^{-1}) = \overline{g} \overline{g}_R^{-1}  =1,\\
&  (v\overline{g})(v \frac{\varsigma (\overline{g}_R^{-1} )}{\epsilon})=\epsilon\frac{\varsigma (\overline{g}_R^{-1})}{\epsilon} \varsigma ( \overline{g})=  \varsigma (\overline{g}_R^{-1}) \varsigma ( \overline{g})=
 \varsigma (\overline{g} \overline{g}_R^{-1}) =\varsigma (1)  =1
\end{split}
\end{equation*}
completing the proof.
\end{proof}

\noindent In \cite{AM1999}, it was proved that after applying the Cayley-Dickson process to a $\mathbb{K}_FG$ algebra we obtain another $\mathbb{K}_{\overline{F}} \overline{G}$ algebra related to the first one which properties are predictable.

\begin{pro} \label{pro:hbvhh} \cite{AM1999} Let $G$ be a  finite abelian group, $F$ a cochain on it ($\mathbb{K}_FG$ is a $G$-graded
quasialgebra). For any $s:G\longrightarrow \mathbb{K}^{\times}$ with $s(e)=1$ we define $\overline{G}=G\times \mathbb{Z}_2$ and on it the cochain $\overline{F}$ and function $\overline{s}$,
\begin{equation*}
\begin{split}
&\overline{F}(x,y)= F (x,y), \; \overline{F}(x,vy)= s(x) F (x,y),\\
&\overline{F}(vx,y)= F (y,x), \;  \overline{F}(vx,vy)= \epsilon s(x) F (y,x),\\
&\overline{s}(x)= s(x), \;  \overline{s}(vx)= -1, \mbox{ for all } x,y \in G.
\end{split}
\end{equation*}
Here $x\equiv (x,0)$ and $vx\equiv (x,1)$ denote elements of $\overline{G}$, where $ \mathbb{Z}_2=\{0,1 \}$ with operation $1+1=0$. If $\varsigma(x)=s(x)x$ is a strong involution, then $\mathbb{K}_{\overline{F}}\overline{G}$ is the algebra obtained from Cayley-Dickson process applied to $\mathbb{K}_FG$.
\end{pro}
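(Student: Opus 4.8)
The plan is to verify directly that the deformed group algebra $\mathbb{K}_{\overline{F}}\overline{G}$, with the explicitly given cochain $\overline{F}$, coincides as an algebra-with-involution with $\overline{\mathbb{K}_FG} = \mathbb{K}_FG \oplus v\mathbb{K}_FG$ produced by the Cayley-Dickson process. Since both sides share the same underlying vector space (spanned by symbols $x$ and $vx$, $x\in G$), it suffices to check that the two multiplications and the two involutions agree on the distinguished basis, and then extend bilinearly. I would first record the Cayley-Dickson product specialized to basis elements: taking $a=x$, $b=0$, $c=y$, $d=0$ gives $x\cdot y$; taking $b=0$, $c=0$ gives $x\cdot(vy)$; taking $a=0$, $d=0$ gives $(vx)\cdot y$; and taking $a=c=0$ gives $(vx)\cdot(vy)$. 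Using the formula $(a+vb)(c+vd) = (ac+\epsilon d\varsigma(b)) + v(\varsigma(a)d + cb)$ together with $\varsigma(x)=s(x)x$ and the product $x.y = F(x,y)xy$ in $\mathbb{K}_FG$, these become, respectively, $F(x,y)\,xy$, $s(x)F(x,y)\,v(xy)$ (this is the $v(\varsigma(a)d)$ term with $a=x$, $d=y$), $F(y,x)\,v(xy)$ (the $v(cb)$ term with $c=y$, $b=x$ — note $yx=xy$ since $G$ is abelian), and $\epsilon\, s(x) F(y,x)\,xy$ (from $\epsilon d\varsigma(b)$ with $d=y$, $b=x$).

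Next I would compare these four expressions with the products in $\mathbb{K}_{\overline{F}}\overline{G}$, where $\bar{x}\cdot\bar{y} = \overline{F}(\bar{x},\bar{y})\,\bar{x}\bar{y}$ and the group operation in $\overline{G}=G\times\mathbb{Z}_2$ is componentwise. Writing $x\equiv(x,0)$ and $vx\equiv(x,1)$, the product $\bar{x}\bar{y}$ in $\overline{G}$ is $xy$, $v(xy)$, $v(xy)$, $xy$ in the four cases, and the scalars $\overline{F}(x,y)=F(x,y)$, $\overline{F}(x,vy)=s(x)F(x,y)$, $\overline{F}(vx,y)=F(y,x)$, $\overline{F}(vx,vy)=\epsilon s(x)F(y,x)$ match term by term. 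The involution check is equally short: $\overline{\varsigma}(a+vb)=\varsigma(a)-vb$ sends $x\mapsto s(x)x$ and $vx\mapsto -vx$, which is exactly $\overline{s}(x)=s(x)$ and $\overline{s}(vx)=-1$ applied in the formula $\overline{\varsigma}(\bar{x})=\overline{s}(\bar{x})\bar{x}$. The hypothesis that $\varsigma(x)=s(x)x$ is a strong involution is what guarantees the Cayley-Dickson construction is legitimate (so that $\overline{A}$ is a well-defined algebra of the expected type), and one should also note $s(e)=1$ forces $\overline{F}(e,\cdot)=\overline{F}(\cdot,e)=1$, so $\overline{F}$ is genuinely a $2$-cochain on $\overline{G}$ and $\overline{s}(e)=s(e)=1$ as required.

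I do not expect any serious obstacle here; the statement is essentially a bookkeeping identity, and the only mild subtlety is keeping track of where the abelianness of $G$ is used (namely to identify $yx$ with $xy$ in the $v(cb)$ and $\epsilon d\varsigma(b)$ terms so that the group products in $\overline{G}$ come out right) and confirming the placement of $s(x)$ versus $s(y)$ in each of the four clauses — the asymmetry $\overline{F}(x,vy)=s(x)F(x,y)$ rather than $s(y)F(x,y)$ traces back precisely to the $\varsigma(a)$ (not $\varsigma(c)$) appearing on the $v$-component of the Cayley-Dickson product. Once the four product identities and the two involution identities are checked on basis elements, bilinearity finishes the proof, so the write-up is just a short display comparing the two sides in each case.
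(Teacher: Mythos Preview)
Your verification is correct and complete. Note, however, that the paper itself does not prove this proposition: it is stated with a citation to \cite{AM1999} and no proof is given in the present paper. Your direct case-by-case comparison of the Cayley--Dickson product with the deformed product in $\mathbb{K}_{\overline{F}}\overline{G}$ is exactly the natural argument, and each of the four product identities and the two involution identities checks out as you describe.
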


\noindent As the deformed group algebras $\mathbb{K}_FG$ are quasicrossed products, we can improve yet this outcome in the following way. Any deformed group algebra $\mathbb{K}_FG$ (see Example \ref{exa K_FG}), with a
2-cochain  $F$ on $G$, has a natural structure of quasicrossed system considering the associative algebra the field $ \mathbb{K}$ itself. Indeed, consider
in the Definition \ref{def_quasicrossed_mapping} of quasicrossed system, $B = \mathbb{K}$, $ \sigma(g) = id$ and
$\alpha(g,h) = F(g,h)$, for all $g, h \in G$. Conditions \eqref{cond_quasicrossed_system1} and \eqref{cond_quasicrossed_system3} are trivial. From  $\phi(g,h,k) =
\frac{F(g,h)F(gh,k)}{F(h,k)F(g,hk)}$  for any $g, h,k \in G$, meaning that  $\mathbb{K}_FG$
is a coboundary graded quasialgebra, we obtain assertion \eqref{cond_quasicrossed_system2}. Reciprocal, it is easy to see that any quasicrossed system $(G,\mathbb{K},\phi,\sigma,\alpha)$, where $\mathbb{K}$ is a field,  corresponds to a deformed group algebra.

\begin{pro} Assume that the deformed group algebra $\mathbb{K}_FG$ held as a quasicrossed product, corresponds to a quasicrossed system $(G,\mathbb{K},\phi,\sigma,\alpha)$ for $G$ over $\mathbb{K}$. If the  algebra $\mathbb{K}_{\overline{F}}\overline{G}$ obtained from the Cayley-Dickson process applied to $\mathbb{K}_FG$  corresponds to a quasicrossed system
$(\overline{G},\mathbb{K},\overline{\phi},\overline{\sigma},\overline{\alpha})$ for $\overline{G}$ over $\mathbb{K}$ then
\begin{equation*}
\begin{split}
&\overline{\alpha}(x,y)=\alpha (x,y),  \;  \overline{\alpha}(x,vy)= s(x) \alpha (x,y),\\
&\overline{\alpha}(vx,y)= \alpha (y,x), \;  \overline{\alpha}(vx,vy)= \epsilon s(x) \alpha (y,x),  \mbox{ for all } x,y \in G.
\end{split}
\end{equation*}
\end{pro}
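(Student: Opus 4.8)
The plan is to combine the explicit description of the quasicrossed system attached to a deformed group algebra, established in the paragraph preceding this statement, with Proposition~\ref{pro:hbvhh}, which identifies the Cayley--Dickson double of $\mathbb{K}_FG$ with $\mathbb{K}_{\overline{F}}\overline{G}$ and records the four formulas for $\overline{F}$.

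First I would pin down the quasicrossed system corresponding to a deformed group algebra by fixing the \emph{canonical} system of units. In $\mathbb{K}_FG$ take, for each $g\in G$, the unit $\overline{g}:=g$ (the group element itself), so that $\overline{e}=1$. Since the product of $\mathbb{K}_FG$ satisfies $g\cdot h=F(g,h)gh$, we get $\overline{g}\,\overline{h}=F(g,h)\,\overline{gh}$; as scalars are central and $\overline{gh}\,(\overline{gh})_R^{-1}=1$, the defining formula \eqref{alpha_condition} of Proposition~\ref{Prop_page_176} gives
\begin{equation*}
\alpha(g,h)=(\overline{g}\,\overline{h})(\overline{gh})_R^{-1}=F(g,h)\bigl(\overline{gh}\,(\overline{gh})_R^{-1}\bigr)=F(g,h),
\end{equation*}
and likewise $\sigma(g)(x)=\overline{g}x\overline{g}_R^{-1}=x$ for $x\in A_e=\mathbb{K}$, so $\sigma(g)=\mathrm{id}$, in accordance with the discussion before the statement. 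Applying the same reasoning verbatim to the deformed group algebra $\mathbb{K}_{\overline{F}}\overline{G}$ with its canonical units $\overline{X}:=X$ for $X\in\overline{G}$ yields $\overline{\alpha}(X,Y)=\overline{F}(X,Y)$ for all $X,Y\in\overline{G}$.

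It then remains to invoke Proposition~\ref{pro:hbvhh}: the algebra obtained from the Cayley--Dickson process applied to $\mathbb{K}_FG$, with the strong involution $\varsigma(x)=s(x)x$, is exactly $\mathbb{K}_{\overline{F}}\overline{G}$, where $\overline{F}(x,y)=F(x,y)$, $\overline{F}(x,vy)=s(x)F(x,y)$, $\overline{F}(vx,y)=F(y,x)$ and $\overline{F}(vx,vy)=\epsilon s(x)F(y,x)$. Under the identifications $x\equiv(x,0)$, $vx\equiv(x,1)$ for elements of $\overline{G}=G\times\mathbb{Z}_2$, substituting $F=\alpha$ (from the first step) into these four equalities reads off precisely the four asserted identities for $\overline{\alpha}$. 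The only point requiring a little care --- and the main, rather mild, obstacle --- is to ensure that the quasicrossed systems on both sides are those attached to the \emph{standard} group-element generators; with a different choice of units one would only get an equivalent system (Theorem~\ref{teo_equivalent}) and the formulas would hold merely up to a coboundary. Once the canonical generators are fixed on both sides, nothing beyond this substitution is needed.
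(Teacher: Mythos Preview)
Your proof is correct and follows essentially the same approach as the paper: identify $\alpha=F$ and $\overline{\alpha}=\overline{F}$ via the canonical choice of group elements as units (as in the paragraph preceding the proposition), then read off the four identities from Proposition~\ref{pro:hbvhh}. Your added remark about needing the canonical generators to avoid only getting equivalence up to coboundary is a nice clarification, but otherwise the argument is the same.
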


\begin{proof} Applying Proposition \ref{pro:hbvhh} we have successively,
\begin{equation*}
\begin{split}
&\overline{\alpha}(x,y)= \overline{F}(x,y)= F (x,y) = \alpha (x,y), \\
&\overline{\alpha}(x,vy)=\overline{F}(x,vy)= s(x) F (x,y)= s(x) \alpha (x,y),\\
&\overline{\alpha}(vx,y)=\overline{F}(vx,y)= F (y,x)= \alpha (y,x), \\
 &\overline{\alpha}(vx,vy)=\overline{F}(vx,vy)= \epsilon s(x) F (y,x) = \epsilon s(x) \alpha (y,x),  \mbox{ for all } x,y \in G.
\end{split}
\end{equation*}

\end{proof}

\section{Simple quasicrossed products}

The aim of this section is to study simple quasicrossed products. We recall the notion of simple graded quasialgebra in general.

\begin{defi}
A graded quasialgebra $A$ is \emph{simple} if $\displaystyle {A}^2 \neq \{0\}$ and it has no
proper graded ideals, or equivalently, if the ideal generated by
each nonzero homogeneous element is the whole quasialgebra.
\end{defi}
\noindent To study simple quasicrossed product we introduce the definition of representation of a graded quasialgebra.
In the following definition of modules, $A =\bigoplus_{g
\in G}A_g$ is a $G$-graded quasialgebra with structure given by $\phi$ and $V =\bigoplus_{k \in
G}V_k$ is a graded vector space over the same group $G$.
We denote by $\bullet $ the product defined in $A$. First we emphasize that  the quasiassociative law in $A$  is performed by $\bullet \circ
(\bullet \otimes id ) = \bullet \circ (id \otimes \bullet) \circ
\Phi_{A,A,A}$ and can be represented by the
following commutative diagram\\
\xymatrix{ \hspace*{2cm}
 &A \otimes A\otimes A\ar[r]^{\Phi_{A,A,A}}  \ar[d]_{\bullet\otimes id}  &
A\otimes A\otimes A\ar[r]^{ \;\; id\otimes \bullet} & A\otimes A \ar[d]^{\bullet}\\
&A\otimes A\ar[rr]_{\bullet}&&A }

\begin{defi}
Consider a degree-preserving map $\varphi : A \otimes V
\longrightarrow V$ and denote $x_g.v_k :=\varphi(x_g,v_k) $. If
for homogeneous elements $x_g \in A_g, x_h \in A_h, v_k \in V_k$,
\begin{equation*}
\begin{split}
&(x_g x_h).v_k = \phi(g,h,k)x_g.(x_h.v_k),\\
& 1.v_k =  v_k,
\end{split}
\end{equation*}
then $V$ is called a \textit{left graded module} over $A$ (or a
\textit{left} $A$-\textit{graded-module}).
\end{defi}
\noindent The condition of left graded module is a natural
polarization of the quasiassociativity of the product on $A$, as
we can see by the following commutative diagram\\
\xymatrix{ \hspace*{2cm}
 &A\otimes A\otimes V\ar[r]^{\Phi_{A,A,V}}  \ar[d]_{\bullet\otimes id}  &
A\otimes A\otimes V\ar[r]^{ \;\; id\otimes \varphi} & A\otimes V
\ar[d]^{\varphi}\\
&A\otimes V\ar[rr]_{\varphi}&&V }

\begin{defi}
Consider a degree-preserving map $\psi : V \otimes A
\longrightarrow V$ and denote $v_k.x_g:=\psi(v_k,x_g)$. If for
homogeneous elements $x_g \in A_g, x_h \in A_h, v_k \in V_k$,
\begin{equation*}
\begin{split}
&(v_k.x_g).x_h = \phi(k,g,h)v_k.(x_g x_h),\\
&  v_k.1 = v_k,
\end{split}
\end{equation*}
then $V$ is called a \textit{right graded module} over $A$ (or a
\textit{right} $A$-\textit{graded-module}).
\end{defi}

\noindent Similarly, the condition of right graded module is
represented in the following commutative diagram\\
\xymatrix{ \hspace*{2cm}
 &V\otimes A\otimes A\ar[r]^{\Phi_{V,A,A}}  \ar[d]_{\psi\otimes id}  &
V\otimes A\otimes A\ar[r]^{ \;\; id\otimes \bullet} & V\otimes A
\ar[d]^{\psi}\\
&V\otimes A\ar[rr]_{\psi}&&V }

\begin{defi} If $V$ is a
left and right graded module of $A$ and if for homogeneous
elements $x_g \in A_g, x_h \in A_h, v_k \in V_k$,
\begin{equation*}
\begin{split}
&(x_g.v_k).x_h = \phi(g,k,h)x_g.(v_k.x_h),
\end{split}
\end{equation*}
then $V$ is called a \textit{graded bimodule} over $A$ (or an
$A$-\textit{graded-bimodule}).
\end{defi}

\noindent Moreover, the condition of graded bimodule is
represented by the following commutative diagram\\
\xymatrix{ \hspace*{2cm}
 &A\otimes V\otimes A\ar[r]^{\Phi_{A,V,A}}  \ar[d]_{\varphi\otimes id}
& A\otimes V\otimes A\ar[r]^{ \;\; id\otimes \psi} & A\otimes V
\ar[d]^{\varphi}\\
&V\otimes A\ar[rr]_{\psi}&&V }

\noindent Now we present some examples of graded modules over graded quasialgebras.
\begin{exa}
Consider the antiassociative quasialgebra
$A:=\widetilde{Mat}_{2,2}(\mathbb{K})$ of the square matrices over the field $\mathbb{K}$ graded by the group  $\mathbb{Z}_2$ such that
$A_{\bar 0}:= \langle E_{11}, E_{22} \rangle $ and $A_{\bar 1}:= \langle E_{12}, E_{21} \rangle $
satisfying the multiplication
\begin{equation*}
\begin{split}
\left(%
\begin{array}{cc}
 a_1 & v_1 \\
 w_1 & b_1 \\
\end{array}%
\right) \cdot \left(%
\begin{array}{cc}
 a_2 & v_2 \\
 w_2 & b_2 \\
\end{array}%
\right) = \left(%
\begin{array}{cc}
 a_1a_2+v_1w_2 & a_1v_2+v_1b_2 \\
 w_1a_2+b_1w_2 & -w_1v_2+b_1b_2 \\
\end{array}%
\right) .
\end{split}
\end{equation*}
Consider the vector space $M:= \langle m, n \rangle$ endowed with the grading by the group $\mathbb{Z}_2$ with
$M_{\bar 0}:= \langle m \rangle $ and $M_{\bar 1}:= \langle n \rangle $ acting on $A$ as follows:
\begin{equation*}
\begin{split}
& mE_{11}=nE_{21}=m, \; mE_{12}=nE_{22}=n,\\
& mE_{22}=mE_{21}=nE_{11}=nE_{12}=0;
\end{split}
\end{equation*}
and on the other side,
\begin{equation*}
\begin{split}
& E_{22}m=E_{21}n=m, \; -E_{12}m=E_{11}n=n,\\
& E_{11}m=E_{21}m=E_{22}n=E_{12}n=0.
\end{split}
\end{equation*}
We check easily  that $M$ is both a right $A$-graded-module and a left $A$-graded-module, although the two structures are not compatible, that is, $M$ is not a graded bimodule over $A$ (just note that $(E_{21}n)E_{12}=n$ and $E_{21}(nE_{12})=0$).
\end{exa}

\begin{exa}
We consider a commutative $G$-graded quasialgebra $\mathbb{K}_FG$.
Applying Proposition 4.6 in \cite{AM1999}, we know that the quasialgebra obtained from  $\mathbb{K}_FG$ by the Cayley-Dickson doubling process can be defined by the same cocycle graded by $G$ being  the degree of the element $vx$ equal to the degree of  $x$, for $x \in G$. Then the subspace $v\mathbb{K}_FG$ constitutes an example of a graded bimodule over $\mathbb{K}_FG$.

Take the concrete case of the simple commutative non-associative algebra $\mathbb{K}_F\mathbb{Z}_3$ with multiplication defined by the table (cf. \cite[Proposition 5.3]{AM1999}):
\begin{center}
\begin{tabular}{c|ccc}
  &   $e$ & $e_1$ & $e_2$ \\
      \hline
  $e$ & $e$ & $e_1$ & $e_2$ \\
  $e_1$ & $e_1$ & $-e_2$ & $e$ \\
  $e_2$ & $e_2$ & $e$ & $e_1$
\end{tabular}
\end{center}
\smallskip
and diagonal involution given by $s(e)=1$ and $s(e_1)=s(e_2)=-1$. Then we can obtain a $\mathbb{Z}_3$-graded quasialgebra with the same cocycle graded by $\mathbb{Z}_3$, constructed by the Cayley-Dickson process  from  $\mathbb{K}_F\mathbb{Z}_3$, where $M:=v\mathbb{K}_F\mathbb{Z}_3$ is a graded bimodule over $\mathbb{K}_F\mathbb{Z}_3$ define
by $M:= \langle ve,ve_1,ve_2 \rangle$ endowed with the grading by the group $\mathbb{Z}_3$ with
$M_{\bar 0}:= \langle ve \rangle $,  $M_{\bar 1}:= \langle ve_1 \rangle $ and $M_{\bar 2}:= \langle ve_2 \rangle $ acting on $\mathbb{K}_F\mathbb{Z}_3$ on the right   and on the left as follows:
\begin{center}
\[
\hspace*{-3cm}
\vbox{\offinterlineskip
\begin{tabular}{c|ccc}
  &   $e$ & $e_1$ & $e_2$ \\
      \hline
  $ve$ & $ve$ & $ve_1$ & $ve_2$ \\
  $ve_1$ & $ve_1$ & $-ve_2$ & $ve$ \\
  $ve_2$ & $ve_2$ & $ve$ & $ve_1$
\end{tabular}}
\hspace*{-7cm}
\vbox{\offinterlineskip
\begin{tabular}{ccc|c}
     $ e$ & $ e_1$ & $ e_2$ &\\
      \hline
  $ve$ & $-ve_1$ & $-ve_2$ &  $ve$  \\
 $ve_1$ & $ve_2$ & $-ve$ &  $ve_1$ \\
  $ve_2$ & $-ve$ & $-ve_1$ &  $ve_2$
\end{tabular}}
\]
\end{center}
\smallskip
\end{exa}

\begin{defi}
Let $V$ be an $A$-graded-bimodule, a \textit{graded submodule} $W
\subset V$ is a submodule (meaning $AW \subset W$) such that $W =
\oplus_{g \in G}(W \cap V_g)$. We said that a $A$-graded-bimodule $V$ is \textit{simple} if it contains no proper graded submodules.
\end{defi}

\begin{exa}\label{example_bimodules}
A graded quasialgebra $A$ is an $A$-graded-bimodule acting on
itself by the product map. Also, each one $A_g \subset A$ is an
$A_e$-graded-bimodule and a graded submodule of $A$, for any $g
\in G$.
\end{exa}

\begin{defi}
Consider two $A$-graded-bimodules $V$ and $V'$. An $A$-linear $f :
V \rightarrow V'$ is said to be a \textit{graded morphism of
degree} $g$ if $f(V_h) \subset V_{hg}'$, for all $h \in G$.
\end{defi}


\noindent Now we recall the definition of radical of a graded quasialgebra.

\begin{defi}
 Let $A$ be a graded quasialgebra. The radical of $A$ is defined by the intersection
\begin{equation*}
\begin{split}
\mbox{rad}(A)=\cap \{ \mbox{ann } M: M \mbox{ simple graded left } A\mbox{-module} \},
\end{split}
\end{equation*}
where ann $M$ is the annihilator of $M$ in $A$.
\end{defi}
\noindent The radical of a graded quasialgebra $A$ is a proper graded ideal of $A$. So $\mbox{rad}(A)=\{0\}$ if $A$ is simple.
\begin{teo}
Let $A$ be a simple quasicrossed product. Then $A_e$ is a semisimple associative algebra.
\end{teo}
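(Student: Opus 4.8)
The plan is to form the two-sided ideal $\mathfrak{J}:=\mathrm{rad}(A_e)\,A$ of $A$ generated by the Jacobson radical of the associative algebra $A_e$, and to use simplicity of $A$ to force $\mathrm{rad}(A_e)=0$.

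First I would fix, as in Proposition~\ref{Prop_page_176}, a family of units $\overline{g}\in U(A)\cap A_g$ with $\overline{e}=1$. Then $A$ is strongly graded, it is free as a left $A_e$-module on $\{\overline{g}:g\in G\}$ with $A_g=A_e\overline{g}$, the maps $\sigma(g)$ are automorphisms of $A_e$ (Lemma~\ref{sigma_product}), each $\alpha(g,h)\in U(A_e)$, and the product is given by \eqref{item4}. Writing $J:=\mathrm{rad}(A_e)$ and using \eqref{item4} in the form $z(y\overline{g})=(zy)\overline{g}$ for $z,y\in A_e$ (recall $\sigma(e)=\mathrm{id}$ and $\alpha(e,g)=1$), one sees $\mathfrak{J}=JA=\bigoplus_{g\in G}J\overline{g}$. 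Since $A=\bigoplus_{g}A_e\overline{g}$ and $A_g=A_e\overline{g}$, this $\mathfrak{J}$ is a \emph{graded} subspace with $\mathfrak{J}\cap A_g=J\overline{g}$; in particular $\mathfrak{J}\cap A_e=J$.

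The one step that needs care is checking that $\mathfrak{J}$ is a two-sided ideal, and it rests on two standard facts about the Jacobson radical: $J$ is a two-sided ideal of $A_e$, and $J$ is stable under every automorphism of $A_e$, so $\sigma(g)(J)=J$ for all $g\in G$. Substituting these into \eqref{item4} gives, for $z\in J$ and $x,y\in A_e$, that $(x\overline{g})(z\overline{h})=x\,\sigma(g)(z)\,\alpha(g,h)\,\overline{gh}\in J\overline{gh}$ and $(z\overline{g})(y\overline{h})=z\,\sigma(g)(y)\,\alpha(g,h)\,\overline{gh}\in J\overline{gh}$, because $J$ absorbs multiplication by $A_e$ on both sides. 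Extending bilinearly yields $A\mathfrak{J}\subseteq\mathfrak{J}$ and $\mathfrak{J}A\subseteq\mathfrak{J}$.

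Finally, since $A$ is simple it has no proper graded ideals, so $\mathfrak{J}=0$ or $\mathfrak{J}=A$. The second alternative is impossible: comparing degree-$e$ components would give $J=\mathfrak{J}\cap A_e=A_e$, whereas $1\in A_e$ never lies in the Jacobson radical of the nonzero unital ring $A_e$. Hence $\mathfrak{J}=0$, so $\mathrm{rad}(A_e)=J=0$ and $A_e$ is semisimple. (One could equivalently note $\mathfrak{J}\subseteq\mathrm{rad}(A)$ and invoke $\mathrm{rad}(A)=0$, but appealing directly to the absence of proper graded ideals is shorter.)
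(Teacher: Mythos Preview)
Your argument is correct. You show directly that $\mathfrak{J}=\bigoplus_{g}J\overline{g}$ is a graded two-sided ideal of $A$: the key observation is that $\sigma(g)(J)=J$ because the Jacobson radical is stable under every ring automorphism, and then the multiplication formula~\eqref{item4} immediately gives closure under left and right multiplication by $A$. Simplicity of $A$ forces $\mathfrak{J}=0$ (the alternative $\mathfrak{J}=A$ being excluded since $1\notin J$), hence $J=\mathrm{rad}(A_e)=0$.

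The paper's own proof reaches the same conclusion $J(A_e)=0$ but by a different, module-theoretic route: it takes a simple graded left $A$-module $M=\bigoplus_g M_g$, argues that each homogeneous piece $M_g$ is a simple $A_e$-module, deduces that $J(A_e)$ annihilates every $M_g$ and hence all of $M$, and concludes $J(A_e)\subseteq\mathrm{rad}(A)=0$. Your approach is more self-contained: it stays entirely on the ring side, avoids the graded-module machinery and the intermediate step $J(A_e)\subseteq\mathrm{rad}(A)$, and only needs the structural description of Proposition~\ref{Prop_page_176} together with the automorphism-invariance of the Jacobson radical. The paper's approach, on the other hand, makes the inclusion $J(A_e)\subseteq\mathrm{rad}(A)$ explicit, which is of independent interest and fits the module-theoretic framework developed in Section~7.
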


\begin{proof}
It is similar to the proof of Theorem 4.3 in \cite{AS2006}. Let $\mbox{J}(A_e)$ denote the Jacobson radical of the associative algebra $A_e$. Given a simple graded $A$-module $M =\bigoplus _{g \in G}M_g$, each $M_g$ is a simple $A_e$-module. Thus if $a_0 \in \mbox{J}(A_e)$ then $a_0M_g=0$, $\forall {g \in G}$. Therefore $\mbox{J}(A_e)\subseteq \mbox{rad}(A)=\{0\}$ and $A_e$ is semisimple.
\end{proof}

\noindent In case $\displaystyle G=\mathbb{Z}_2$, the classification of quasialgebras that have semisimple associative null part was done in \cite{AEPI2002}, so we have the following result.

\begin{teo}
Any simple quasicrossed product ${A}$ of $\mathbb{Z}_2$ over ${A}_{\bar 0}$ is isomorphic to one of the following algebras:
\begin{enumerate}
  \item[(i)]  $\displaystyle \mbox{Mat}_n(\Delta)$, for some $n$ and some division antiassociative quasialgebra $\Delta$;
  \item[(ii)]  $\widetilde{Mat}_{n,m}(D)$, for some natural numbers $n$ and $m$ and some division algebra $D$.
\end{enumerate}
Moreover, the natural numbers $n$ and $m$ are uniquely determined by $A$ and so are (up to isomorphism) the division antiassociative quasialgebra $\Delta$ and the division algebra $D$.
\end{teo}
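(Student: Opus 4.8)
The plan is to deduce the theorem from the preceding one together with the structure theory of $\mathbb{Z}_2$-graded quasialgebras with semisimple even part developed in \cite{AEPI2002}. The first step is to note that, $A$ being a \emph{simple} quasicrossed product of $\mathbb{Z}_2$ over $A_{\bar 0}$, the preceding theorem applies and shows that $A_{\bar 0}$ is a semisimple associative algebra; since $1 \in A_{\bar 0}$ and $A^2 \neq \{0\}$, the algebra $A$ is a nontrivial unital $\mathbb{Z}_2$-graded quasialgebra with semisimple even part. Recall also that over $G = \mathbb{Z}_2$ only two cocycles occur, the trivial one (associative case) and $\phi(x,y,z) = (-1)^{xyz}$ (antiassociative case), so the hypotheses of the cited classification are satisfied in either case and no further cocycle bookkeeping is needed.

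The second, and main, step is to invoke \cite[Theorem 10]{AEPI2002}, already recalled in the example on $\widetilde{Mat}_{n,m}(D)$ above: a nontrivial unital $\mathbb{Z}_2$-graded quasialgebra with semisimple even part is, as a graded quasialgebra, a finite direct sum of graded ideals, each isomorphic either to $\mbox{Mat}_{n_i}(\Delta^i)$ for some $n_i$ and some division antiassociative quasialgebra $\Delta^i$, or to $\widetilde{Mat}_{n_j,m_j}(D^j)$ for some $n_j, m_j$ and some division algebra $D^j$. Now simplicity of $A$ enters decisively: each summand of such a decomposition is a nonzero graded ideal of $A$, so if there were two or more summands each one would be a proper nonzero graded ideal, contradicting simplicity. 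Hence the decomposition has exactly one term, and $A$ is graded-isomorphic either to some $\mbox{Mat}_n(\Delta)$ or to some $\widetilde{Mat}_{n,m}(D)$, which is assertions (i) and (ii).

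For the uniqueness assertion, the idea is to read the invariants off the even part (and, in case (i), the odd part). In case (i) one has $A_{\bar 0} \cong \mbox{Mat}_n(\Delta_{\bar 0})$ with $\Delta_{\bar 0}$ a division associative algebra, so $A_{\bar 0}$ is a \emph{simple} algebra; in case (ii) one has $A_{\bar 0} \cong \mbox{Mat}_n(D) \times \mbox{Mat}_m(D)$, which has exactly two primitive central idempotents. Thus the number of simple blocks of $A_{\bar 0}$ distinguishes the two families, so no $A$ lies in both. Within case (i), Wedderburn's theorem applied to the semisimple algebra $A_{\bar 0}$ recovers $n$ and $\Delta_{\bar 0}$, and then $\Delta$ (equivalently the triple $\langle D, \sigma, a\rangle$) is recovered from the $A_{\bar 0}$-bimodule structure of $A_{\bar 1}$ together with the squaring map $A_{\bar 1} \times A_{\bar 1} \to A_{\bar 0}$. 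Within case (ii), Wedderburn recovers the unordered pair $\{n, m\}$ and the division algebra $D$, and the labelling of $n$ and $m$ is fixed by distinguishing the two diagonal blocks (e.g. by their sizes). Alternatively, one may simply quote the uniqueness clause of \cite[Theorem 10]{AEPI2002}.

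I expect the only delicate point to be making sure that the decomposition produced by \cite{AEPI2002} is a decomposition into \emph{graded} ideals and that the isomorphisms in (i) and (ii) are isomorphisms of graded quasialgebras, since this is precisely what allows the graded-simplicity hypothesis to collapse the direct sum to a single summand; this is the exact form in which the cited theorem is stated, so the argument goes through. The genuinely substantial work sits in \cite[Theorem 10]{AEPI2002} and in the preceding theorem, and the present proof is just the short passage from that structure theorem to the simple case, plus the Wedderburn uniqueness remark.
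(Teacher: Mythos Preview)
Your proposal is correct and follows exactly the approach the paper indicates: the paper gives no explicit proof here, simply remarking that since the preceding theorem forces $A_{\bar 0}$ to be semisimple, the classification of $\mathbb{Z}_2$-quasialgebras with semisimple null part from \cite{AEPI2002} applies. Your write-up makes this implicit argument explicit, including the use of graded simplicity to reduce the direct-sum decomposition of \cite[Theorem~10]{AEPI2002} to a single summand and the appeal to the uniqueness clause there.
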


\section{The $\mathbb{K}_FG$ case}

\begin{defi}\label{defi_center}
The \emph{centralizer} of a subset $X$ of a graded quasialgebra
$A$ is the subset

\begin{equation*}
\begin{split}
C_A(X) := \{a \in A : ax = xa, \hspace{0.2cm} \hbox{for all}
\hspace{0.2cm} x \in X\}
\end{split}
\end{equation*}
\noindent In particular, if $X = A$ then $C_A(A) = C(A)$ is the
\emph{center} of the graded quasialgebra.
A graded quasialgebra $A$ over a field $\mathbb{K}$ is called
\emph{central} if $C(A) = \mathbb{K}$.
\end{defi}

\begin{defi}\label{defi_central_simple}
A finite-dimensional graded quasialgebra $A$ over a field
$\mathbb{K}$ is \emph{central simple} if $A$ is simple and
central.
In particular, if $A$ is central the concept of central simple
agree with the usual simplicity.
\end{defi}

\begin{exa}
\begin{enumerate}
\item Any simple algebra is a central simple graded quasialgebra
over its center.

\item The complex number $\mathbb{C}$ is a central simple algebra over
$\mathbb{C}$, but not over the real numbers $\mathbb{R}$ (the center
of $\mathbb{C}$ is all  $\mathbb{C}$, not just $\mathbb{R}$).

\item The quaternions $\mathbb{H}$ form a 4-dimensional central
simple algebra over $\mathbb{R}$.
\end{enumerate}
\end{exa}

\begin{teo}
The deformed group algebra $A $ corresponding to a given quasicrossed system $ (G,\mathbb{K},\phi,\sigma,\alpha)$ is a
central simple quasialgebra over the field $\mathbb{K}$.
\label{teo:g89p0oi}
\end{teo}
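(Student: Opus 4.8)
The plan is to show directly that the center of the deformed group algebra $A=\mathbb{K}_FG$ reduces to $\mathbb{K}$ and that $A$ has no proper graded ideals. Recall from Example \ref{exa K_FG} and the discussion preceding Proposition \ref{pro:hbvhh} that $A$ is the $G$-graded quasialgebra with $A_g=\mathbb{K}\,\overline{g}$, multiplication $\overline{g}\,\overline{h}=F(g,h)\overline{gh}$, and that it is a division graded quasialgebra (every nonzero homogeneous element is invertible); the cocycle is the coboundary $\phi(g,h,k)=F(g,h)F(gh,k)/(F(h,k)F(g,hk))$. Since $A$ is finite-dimensional, it suffices to establish simplicity and centrality in the graded sense.

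First I would prove simplicity. Let $I=\bigoplus_{g\in G}I_g$ be a nonzero graded ideal; pick $0\neq x\in I_g$ for some $g$. Because $A$ is a division graded quasialgebra, $x$ is invertible, so $x\,\overline{g}_R^{-1}$ is a nonzero scalar multiple of $1\in A_e$, hence $1\in I$ after absorbing the scalar; then $I=A$. (Alternatively one invokes the last clause of the definition of \emph{simple}: the ideal generated by any nonzero homogeneous element is all of $A$, which is immediate here since that element is invertible.) This gives $A^2\neq\{0\}$ trivially and no proper graded ideals, so $A$ is simple.

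Next I would compute the center $C(A)$. Write a general element as $a=\sum_{g}a_g\overline{g}$ with $a_g\in\mathbb{K}$. For each $h\in G$, imposing $a\,\overline{h}=\overline{h}\,a$ and comparing the coefficient of $\overline{gh}$ gives, for every $g$,
\begin{equation*}
a_g F(g,h)=a_{hgh^{-1}}F(h,hgh^{-1})\cdot\big(\text{a ratio of $F$-values encoding }\phi\big),
\end{equation*}
where the $\phi$-factor appears because $\overline{h}\,\overline{g}$ must be rewritten in terms of $\overline{hg}$; more conceptually, since $\sigma(g)=\mathrm{id}$ here, the computation of Proposition \ref{Prop_page_176}(iv) shows $(\overline{g})(\overline{h})=\alpha(g,h)\overline{gh}$ with $\alpha(g,h)=F(g,h)$, and the cocycle/coboundary relations among the $F$'s force each $a_g$ with $g\neq e$ to vanish once we test against a suitable $h$. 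Concretely: if $a_g\neq 0$ for some $g\neq e$, choosing $h=g$ and using that $\phi$ is the coboundary of $F$ yields a contradiction with one of the identities in Lemma \ref{lem:propcocycle}, unless $g=e$. Hence $C(A)\subseteq A_e=\mathbb{K}\,\overline{e}=\mathbb{K}$, and since $1\in C(A)$ we get $C(A)=\mathbb{K}$, i.e. $A$ is central. Combining the two parts, $A$ is central simple over $\mathbb{K}$ by Definition \ref{defi_central_simple}.

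The main obstacle is the centrality computation: one must carefully track how the associator $\phi$ enters when re-expressing $\overline{h}\,\overline{g}$ and $\overline{g}\,\overline{h}$ in the common basis $\{\overline{k}\}$, and then extract from the coboundary form of $\phi$ and from items (i)–(iv) of Lemma \ref{lem:propcocycle} that no nontrivial homogeneous component can be central. A clean way to organize this is to use equation \eqref{item4} of Proposition \ref{Prop_page_176} with $\sigma(g)=\mathrm{id}$: commutation of $a_g\overline{g}$ with $\overline{h}$ becomes $a_g\alpha(g,h)\overline{gh}=a_g\alpha(h,g)\overline{hg}$ (in the abelian-support case $gh=hg$), so centrality of $\overline{g}$ forces $\alpha(g,h)=\alpha(h,g)$ for all $h$, i.e. $F(g,h)=F(h,g)$ for all $h$; this symmetry condition on a $2$-cochain, together with $F(g,e)=1$, pins down $g=e$ because the support generates $G$ and the grading is fine. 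That reduction is the crux; the rest is bookkeeping.
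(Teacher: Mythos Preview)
Your simplicity argument is correct and in fact tidier than the paper's: since every nonzero homogeneous element of $A$ is invertible, any nonzero graded ideal contains a unit and hence equals $A$.

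The centrality argument, however, has a genuine gap that cannot be closed along the lines you sketch. You set $\sigma(g)=\mathrm{id}$ throughout and reduce centrality of $\overline{g}$ to the symmetry condition $F(g,h)=F(h,g)$ for all $h$, then assert that this ``pins down $g=e$''. That assertion is false. Take $G$ any nontrivial abelian group with $F\equiv 1$: then $A=\mathbb{K}G$ is commutative, so $C(A)=A\supsetneq\mathbb{K}$, yet the grading is fine (each component is one-dimensional) and the support generates $G$. The same happens for $G=\mathbb{Z}_2$ with $F(1,1)=-1$ (your own complex-number example), where $F$ is symmetric. Neither fineness of the grading nor Lemma~\ref{lem:propcocycle} forces $g=e$ here; the coboundary identities only relate values of $F$ and say nothing about which $g$ can satisfy $F(g,\cdot)=F(\cdot,g)$.

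The paper's proof uses a different mechanism entirely: it does \emph{not} set $\sigma=\mathrm{id}$. One commutes a putative central element $x=\sum_g k_g\overline{g}$ with scalars $k\in\mathbb{K}=A_e$ and uses the quasicrossed-product rule $\overline{g}\,k=\sigma(g)(k)\,\overline{g}$ to obtain $k_g\bigl(k-\sigma(g)(k)\bigr)=0$ for every $k$; hence $k_g\neq 0$ forces $\sigma(g)=\mathrm{id}$. The paper then invokes (implicitly) that $\sigma(g)\neq\mathrm{id}$ for $g\neq e$ to conclude $x\in\mathbb{K}$, and re-uses the same computation for the simplicity step to strip a nonzero ideal element down to its $A_e$-component. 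In short, the nontrivial $\sigma$-action on $\mathbb{K}$ is what separates homogeneous degrees in the paper's argument; by specializing to $\sigma=\mathrm{id}$ you have discarded exactly the ingredient that drives the proof, and in that specialization the statement is not even true.
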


\begin{proof}
If an element $x = \sum\limits_{g \in G}k_g\overline{g} \in A$,
with $k_g \in \mathbb{K}$, belongs to the center of $A$, then
it commutes with all elements $k$ of $\mathbb{K}$, and therefore

\begin{equation*}
\begin{split}
\sum\limits_{g \in G}(kk_g)\overline{g} =
(k\overline{e})\Bigl(\sum\limits_{g \in G}k_g\overline{g}\Bigr) =
\Bigl(\sum\limits_{g \in G}k_g\overline{g}\Bigl)(k\overline{e}) =
\sum\limits_{g \in G}k_g\sigma(g)(k)\alpha(g,e)\overline{ge}
\end{split}
\end{equation*}

\begin{equation*}
\begin{split}
= \sum\limits_{g \in G}k_g\sigma(g)(k)\overline{g}
\end{split}
\end{equation*}

\noindent This means that whenever $k_g \neq 0$, then $k = \sigma(g)(k)$ for
all $k \in \mathbb{K}$, and consequently $\sigma (g) = id$. Thus
$C_A(\mathbb{K}) = \mathbb{K}$, and so $C(A) \subset \mathbb{K}$. However, if   $k \in \mathbb{K}
\cap C(A)$, then $k\overline{g} = \overline{g}k =
\sigma(g)(k)\overline{g}$, for all $g \in G$, and consequently $k
\in \{a \in A : a\overline{g} = \sigma(g)(a)\overline{g},
\hspace{0.2cm} \mbox{for any } g \in G\} = \mathbb{K}$. Thus $C(A)
= \mathbb{K}$ and the quasialgebra $A$ is central.

Now, let $I$ be a graded ideal of $A$. Choose a non-zero element
$x = \sum_{g \in G}x_g\overline{g}$ in $ I$. Multiplying $x$ by
$\overline{g}$, for a suitable $g$, we can assume that $x_e \neq
0$. Let $k$ be an arbitrary element of $\mathbb{K}$. Since $A$ is central then $kx - xk=0$. But

\begin{equation*}
\begin{split}
kx - xk = \sum\limits_{g \in G}kx_g\overline{g} - \sum\limits_{g
\in G}x_g\overline{g}k = \sum\limits_{g \in G}kx_g\overline{g} -
\sum\limits_{g \in G}x_g\sigma(g)(k)\overline{g} = \sum\limits_{g \in
G}(kx_g - \sigma(g)(k)x_g)\overline{g}.
\end{split}
\end{equation*}

\noindent  For each $g \neq e$, we have that $\sigma(g) $ is not the identity map in $\mathbb{K}$ and by the previous calculations $ x_g = 0$. So  $x = x_e\overline{e}$ is an invertible element of the ideal $I$, hence $I = A$. Therefore $A$ is simple as desired.
\end{proof}

\end{document}